\newtheorem{definition}{Definition}[section]
\newtheorem{lemma}{Lemma}[section]
\newtheorem{assumption}{Assumption}[section]
\newtheorem{theorem}{Theorem}[section]
\newtheorem{proposition}{Proposition}[section]
\newcommand{\ba}{\begin{array}}
\newcommand{\ea}{\end{array}}
\newcommand{\bit}{\begin{itemize}}
\newcommand{\eit}{\end{itemize}}
\newcommand{\be}{\begin{equation}}
\newcommand{\ee}{\end{equation}}
\newcommand{\bee}{\begin{equation*}}
\newcommand{\eee}{\end{equation*}}
\newcommand{\bea}{\begin{eqnarray}}
\newcommand{\eea}{\end{eqnarray}}
\newcommand{\prox}{\text{prox}}
\newcommand{\mbR}{\mathbb{R}}
\newcommand{\Rmn}[1]{\uppercase\expandafter{\romannumeral#1}}
\newcommand{\Mcal}{\mathcal{M}}
\newcommand{\grad}{\mathrm{grad}}
\newcommand{\R}{\mathbb{R}}
\title{Adaptive Riemannian ADMM for Nonsmooth Optimization: Optimal Complexity without Smoothing}
\author{%
  Kangkang Deng\thanks{Equal contribution.}
   \\
  College of Science\\
  National University of Defense Technology\\
  Changsha, CHINA \\
  \texttt{freedeng1208@gmail.com} \\
   \And
   Jiachen Jin\footnotemark[1] \\
  College of Science\\
  National University of Defense Technology\\
   Changsha, CHINA \\
   \texttt{jinjiachen@nudt.edu.cn} \\
   \And
      Jiang Hu \\
 Yau Mathematical Sciences Center\\
       Tsinghua University\\
       Beijing, CHINA\\
   \texttt{hujiangopt@gmail.com} \\
   \AND
  Hongxia Wang\thanks{Corresponding author.} \\
  College of Science\\
  National University of Defense Technology\\
   Changsha, CHINA \\
\texttt{wanghongxia@nudt.edu.cn} \\
}
\begin{document}

\maketitle

\begin{abstract}

We study the problem of minimizing the sum of a smooth function and a nonsmooth convex regularizer over a compact Riemannian submanifold embedded in Euclidean space. By introducing an auxiliary splitting variable, we propose an adaptive Riemannian alternating direction method of multipliers (ARADMM), which, for the first time, achieves convergence without requiring smoothing of the nonsmooth term. Our approach involves only one Riemannian gradient evaluation and one proximal update per iteration. Through careful and adaptive coordination of the stepsizes and penalty parameters, we establish an optimal iteration complexity of order $\mathcal{O}(\epsilon^{-3})$ for finding an $\epsilon$-approximate KKT point, matching the complexity of existing smoothing technique-based Riemannian ADMM methods. Extensive numerical experiments on sparse PCA and robust subspace recovery demonstrate that our  ARADMM consistently outperforms state-of-the-art  Riemannian ADMM variants in convergence speed and solution quality.

\end{abstract}

\section{Introduction}

Optimization over Riemannian manifolds has garnered significant interest due to its wide-ranging applications in machine learning, statistics, signal processing, and beyond. While the theory and algorithms for smooth manifold optimization have been extensively developed (see \cite{AbsMahSep2008,boumal2023intromanifolds,sato2021riemannian,hu2020brief}), recent years have witnessed a growing need to address nonsmooth objectives, which arise naturally in tasks such as sparse PCA \cite{jolliffe2003modified}, nonnegative PCA \cite{zass2006nonnegative,jiang2023exact}, and semidefinite programming \cite{burer2003nonlinear,wang2023decomposition}. 

Formally, we consider the nonsmooth optimization problem on Riemannian manifold
\begin{equation}\label{prob:composite}
    \min_{x\in\mathcal{M}} f(x) +  h(\mathcal{A}x). 
\end{equation}
where $\Mcal$ is a Riemannian submanifold embedded in $\mathbb{R}^n$, $f:\mathbb{R}^n \rightarrow \mathbb{R}$ is a continuously differentiable function, $\mathcal{A}:\mathbb{R}^n \rightarrow \mathbb{R}^m$ is a linear mappings, $h:\R^m \rightarrow (-\infty,+\infty]$ is a proper closed convex function. Hence, the objective function of \eqref{prob:composite} can be nonconvex and nonsmooth. 
By introducing an auxiliary variable $y = \mathcal{A}x$, one obtains the linear constrained problem
\begin{equation}\label{prob:composite-split}
    \min_{x,y} f(x) +  h(y),~\mbox{s.t.}\;\; \mathcal{A}x = y,~x\in\mathcal{M}.
\end{equation}

To tackle such constrained nonsmooth problems, a natural candidate is the alternating direction method of multipliers (ADMM), which has emerged as a powerful framework for solving large-scale structured optimization problems. One of the earliest attempts to extend ADMM to \eqref{prob:composite-split} is the \textit{manifold ADMM} (MADMM) proposed in \cite{Kovna2015}, which adopts the following iteration scheme:  
\begin{equation}\label{iter:madmm}
    \left\{
\begin{aligned}
    x_{k+1} & = \arg\min_{x\in \Mcal} f(x)  - \langle \lambda_k,  \mathcal{A}x - y_k \rangle + \frac{\rho}{2}\|\mathcal{A}x - y_k\|^2, \\
    y_{k+1} & = \arg\min_y h(y)  - \langle \lambda_k,  \mathcal{A}x_{k+1} - y \rangle + \frac{\rho}{2}\|\mathcal{A}x_{k+1} - y\|^2, \\
    \lambda_{k+1} &= \lambda_k - \rho(\mathcal{A}x_{k+1} - y_{k+1}).
\end{aligned}
    \right.
\end{equation} 
where $\lambda$ denotes the Lagrange multiplier and $\rho > 0$ is a penalty parameter.
In MADMM, the \( x \)-subproblem is a smooth Riemannian optimization task that can be solved by using any gradient-based Riemannian algorithm, while the \( y \)-subproblem admits a closed-form solution via a proximal operator of $h$. Although this algorithm constitutes a direct generalization of the classical Euclidean ADMM, its convergence has remained elusive due to the inherent nonconvexity of Riemannian manifolds. More recently, Li et al. \cite{li2024riemannian} introduced a smoothing technique to reformulate the original nonsmooth problem into a smooth approximation
\begin{equation}\label{prob:composite-split-smooth}
    \min_{x\in\mathcal{M}} f(x) +  h_{\mu}(\mathcal{A}x),
\end{equation}
where $h_{\mu}$ denotes the smooth approximation of $h$ and $\mu$ is a smoothing parameter. They proposed a smoothed ADMM scheme for solving the surrogate problem \eqref{prob:composite-split-smooth}, and established an \( \mathcal{O}(1/\epsilon^4) \) complexity bound. Building upon this idea, \cite{yuan2024admm} further incorporated adaptive smoothing strategies, resulting in an improved algorithm. However, all these methods are designed specifically for smoothed formulations, which rely critically on smoothing parameters, rather than directly addressing the original nonsmooth problem. 

In this work, we are interested in the convergence analysis of original Riemannian ADMM \eqref{iter:madmm}. 
A fundamental question arises: 
\begin{center}
\emph{Can we establish convergence of Riemannian ADMM  for the original nonsmooth problem \eqref{prob:composite}, without relying on smoothing?}
\end{center}
This question motivates our study. Our contributions can be summarized as follows:

\begin{itemize}
    \item We propose a novel adaptive RADMM (ARADMM) for nonsmooth optimization over compact Riemannian submanifolds. Unlike existing smoothing-based Riemannian ADMM methods, our approach achieves convergence without smoothing the nonsmooth regularizer. Moreover, our adaptive strategy dynamically updates the stepsize and penalty parameter during the iterations, avoiding the expensive exact subproblem solutions required by conventional Riemannian ADMM variants. Consequently, each iteration only requires one Riemannian gradient evaluation and one proximal computation, significantly reducing the per-iteration cost. Numerical experiments demonstrate the superior empirical performance of our ARADMM over existing ones.
    \item Through careful and adaptive coordination of the stepsize and penalty parameter, we establish an optimal iteration complexity of order $\mathcal{O}(\epsilon^{-3})$ for finding an $\epsilon$-approximate KKT point. This matches the best-known complexity achieved by smoothing-based Euclidean and Riemannian ADMM methods, while entirely avoiding the need for smoothing. A key technical innovation is the adaptive selection of dual step sizes and penalty parameters, which explicitly bounds the differences between multipliers by the norms of the corresponding primal iterate differences---an essential property for establishing convergence without requiring exact subproblem solutions.
\end{itemize}

\subsection{Related works}

Most existing works focus on simplified instances of problem~\eqref{prob:composite-split}. These works can be broadly categorized into three classes. First, subgradient and proximal point methods for geodesically convex problems have been studied in \cite{Bento2017Iteration, ferreira2019iteration, ferreira1998subgradient, de2016new, ferreira2002proximal}. These algorithms typically require stronger assumptions—such as geodesic convexity—and often suffer from slower convergence in practice compared to other approaches. Second, proximal gradient-type methods, such as those in \cite{chen2020proximal, huang2022riemannian, huang2023inexact}, apply in the special case where \( \mathcal{A} = \mathcal{I} \). Each iteration of these methods involves solving a subproblem that lacks a closed-form solution, which is typically handled using semismooth Newton techniques. Third, primal-dual methods based on the augmented Lagrangian framework have been developed, including operator splitting algorithms \cite{Lai2014A}, manifold-based augmented Lagrangian methods \cite{deng2022manifold, zhou2023semismooth, deng2024oracle, xu2024riemannian,xu2025oracle}, and Riemannian ADMM variants \cite{Kovna2015, li2024riemannian, yuan2024admm,zhang2020primal}. Among these, ADMM is particularly attractive due to the separable structure of the objective and constraint in problem~\eqref{prob:composite}, which enables efficient and scalable updates.

In the case when the manifold $\Mcal$ is specified
by equality constraints $c(x) = 0$, e.g., the Stiefel manifold. Problem \eqref{prob:composite} can be regarded as a constrained optimization problem with a nonsmooth and nonconvex objective function. Given that $\mathcal{M}$ is often nonconvex, we only list the related works in case of that the constraint functions are nonconvex. 
 Papers \cite{li2021rate,sahin2019inexact} propose and study the iteration complexity of augmented Lagrangian methods for solving nonlinearly
constrained nonconvex composite optimization problems. The iteration complexity results they achieve for an $\epsilon$-stationary point are both $\tilde{\mathcal{O}}(\epsilon^{-3})$.  More specifically, \cite{sahin2019inexact} uses the accelerated gradient method of \cite{ghadimi2016accelerated} to obtain the
approximate stationary point. On the other hand, the authors in \cite{li2021rate} obtain such an approximate stationary point by applying
an inner accelerated proximal method as in \cite{carmon2018accelerated,kong2019complexity}, whose generated subproblems are convex. It is worth mentioning that both of these papers make a strong assumption about how the feasibility of an iterate is related to its stationarity. Lin et al. \cite{lin2019inexact} propose an inexact proximal-point penalty method by solving a sequence of penalty subproblems.  Under a non-singularity condition, they show a
complexity result of $\tilde{\mathcal{O}}(\epsilon^{-3})$. More recently, some  works \cite{hien2024inertial,el2023linearized,hien2024multiblock} apply ADMM algorithmic framework by penalizing the nonlinear constraint. However, those approaches do not exploit the underlying manifold geometry and essentially reduces to penalty-based methods.

In Table \ref{tab1}, we summarize our complexity results and several existing Riemannian ADMM methods to produce
an $\epsilon$-stationary point.   We  do not list the algorithm in \cite{zhang2020primal} since the subproblem is difficult and requires a strong assumption on the problem: the last block variable cannot appear in the nonsmooth objective.  It can easily be shown that our algorithms achieve better oracle complexity results. 

\begin{table} 
   \caption{Comparison of the oracle complexity results of several methods in the literature to our method to produce an
$\epsilon$-stationary point }
    \centering
    \begin{tabular}{|c|c|c|c|c|}
  \hline
  Algorithms  & Manifold     & Iteration & Without smoothing & Single-loop \\ \hline
 MADMM \cite{Kovna2015}  & compact      & --- & Yes & No \\ \hline
   RADMM \cite{li2024riemannian}  & compact      & $\mathcal{O}(\epsilon^{-4})$ & No & Yes \\ \hline
OADMM \cite{yuan2024admm}  & Stiefel      & $\mathcal{O}(\epsilon^{-3})$ & No & Yes \\ \hline
     
  \textbf{this paper}  &  compact &    $\mathcal{O}(\epsilon^{-3})$ & Yes & Yes\\
  \hline
\end{tabular}
 \label{tab1}
\end{table}

\subsection{Notation} Let $\left<\cdot,\cdot\right>$ and $\|\cdot \|$ be the Euclidean product and induced Euclidean norm. Given a matrix $A$, we use $\|A\|_F$ to denote the Frobenius norm, $\|A\|_1:=\sum_{ij}\vert A_{ij}\vert$ to denote the $\ell_1$ norm. For a vector $x$, we use $\|x\|_2$ and $\|x\|_1$ to denote its Euclidean norm and $\ell_1$ norm, respectively.  The distance from $x$ to $\mathcal{C}$ is denoted by $\mathrm{dist}(x,\mathcal{C}): = \min_{y\in\mathcal{C}}\|x-y\|$. We use $\nabla f(x)$ and $\grad f(x)$ to denote the Euclidean gradient and Riemannian gradient of $f$, respectively. $\|\mathcal{A}\|_{op}$ denotes the operator norm of a linear operator $\mathcal{A}$.

\section{Preliminary}

\subsection{Riemannian optimization}
An $n$-dimensional smooth manifold $\mathcal{M}$ is an   $n$-dimensional topological manifold equipped with a smooth structure, where each point has a neighborhood that is diffeomorphism to the $n$-dimensional Euclidean space.  The tangent space of a manifold $\mathcal{M}$ at $x$ is denoted by $T_x\mathcal{M}$.  In this paper, we consider the case that $\mathcal{M}$ is a Riemannian submanifold of an Euclidean space $\mathcal{E}$, the inner product is defined as the Euclidean inner product: $\left<\eta_x,\xi_x\right> = \mathrm{tr}(\eta_x^\top \xi_x)$. The Riemannian gradient is given by $\grad f(x) = \mathcal{P}_{T_x\mathcal{M}}(\nabla f(x))$, where $\nabla f(x)$ is the Euclidean gradient, $\mathcal{P}_{T_x \mathcal{M}}$ is the projection operator onto the tangent space $T_x \mathcal{M}$. 
 The retraction operator is one of the most important ingredients for manifold optimization, which turns an element of $T_x\mathcal{M}$ into a point in $\mathcal{M}$.

\begin{definition}[Retraction, \cite{AbsMahSep2008}]\label{def-retr}
  A retraction on a manifold $\mathcal{M}$ is a smooth mapping $\mathcal{R}:T\mathcal{M}\rightarrow \mathcal{M}$ with the following properties. Let $\mathcal{R}_x:T_x\mathcal{M} \rightarrow \mathcal{M}$ be the restriction of $\mathcal{R}$ at $x$. It satisfies
\begin{itemize}
  \item $\mathcal{R}_x(0_x) = x$, where $0_x$ is the zero element of $T_x\mathcal{M}$,
  \item ${D}\mathcal{R}_x(0_x) = id_{T_x\mathcal{M}}$,where $id_{T_x\mathcal{M}}$ is the identity mapping on $T_x\mathcal{M}$.
\end{itemize}
\end{definition}

We also give the following definition of vector transport. 
\begin{definition}[Vector transport,  \cite{AbsMahSep2008}]
    Given a Riemannian manifold $\mathcal{M}$, the vector transport $\mathcal{T}_x^y$ is an operator that transports a tangent vector $v \in T_x \mathcal{M}$ to the tangent space $T_y \mathcal{M}$, i.e., $\mathcal{T}_x^y(v) \in T_y \mathcal{M}$. In this paper, we assume $\mathcal{T}_x^y$ is isometric.
\end{definition}

We have the following Lipschitz-type inequalities on the retraction on the compact submanifold.
\begin{proposition}[{\cite[Appendix B]{grocf}}]
    Let $\mathcal{R}$ be a retraction operator on a compact submanifold $\Mcal$. Then, there exist two positive constants $\alpha, \beta$ such that
    for all $x\in \mathcal{M}$ and  all $u \in T_{x}\mathcal{M}$, we have
   \begin{equation}\label{eq:retrac-lipscitz}
   \begin{aligned}
     \|\mathcal{R}_x(u) - x\| &\leq \alpha \|u\|, ~~\|\mathcal{R}_x(u) - x - u\| &\leq \beta \|u\|^2.
   \end{aligned}
   \end{equation}
\end{proposition}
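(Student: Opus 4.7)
The plan is to exploit smoothness of the retraction together with compactness of $\mathcal{M}$, splitting the analysis according to whether $\|u\|$ is small or large. For small tangent vectors, the two inequalities will follow from a second-order Taylor expansion of $\mathcal{R}_x$ around $0 \in T_x \mathcal{M}$; for large tangent vectors, compactness yields crude but sufficient bounds via the diameter of $\mathcal{M}$.

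First, I would apply second-order Taylor's theorem to the smooth mapping $u \mapsto \mathcal{R}_x(u)$. Using $\mathcal{R}_x(0) = x$ and $D\mathcal{R}_x(0) = \mathrm{id}_{T_x\mathcal{M}}$, one gets the integral remainder
\begin{equation*}
\mathcal{R}_x(u) - x - u \;=\; \int_0^1 (1-t)\, D^2 \mathcal{R}_x(tu)[u,u]\, dt,
\end{equation*}
which yields $\|\mathcal{R}_x(u) - x - u\| \leq \tfrac{1}{2} M_x(u) \|u\|^2$, where $M_x(u) := \sup_{t\in[0,1]}\|D^2 \mathcal{R}_x(tu)\|$. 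The first inequality then follows from the triangle inequality, namely $\|\mathcal{R}_x(u) - x\| \leq (1 + \tfrac{1}{2} M_x(u) \|u\|)\,\|u\|$.

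Next, I would upgrade these pointwise estimates to uniform ones via compactness. Consider the closed ball bundle $B := \{(x,u) \in T\mathcal{M} : \|u\| \leq 1\}$, which is compact because $\mathcal{M}$ is. Since $(x,u) \mapsto D^2 \mathcal{R}_x(u)$ is continuous on $T\mathcal{M}$, it is bounded on $B$ by some constant $C$. Thus for all $x \in \mathcal{M}$ and all $u \in T_x\mathcal{M}$ with $\|u\| \leq 1$, one obtains $\|\mathcal{R}_x(u) - x - u\| \leq \tfrac{C}{2}\|u\|^2$ and $\|\mathcal{R}_x(u) - x\| \leq (1 + \tfrac{C}{2})\|u\|$.

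Finally, for $\|u\| > 1$, compactness of $\mathcal{M}$ supplies a diameter $D := \sup_{y,z\in\mathcal{M}}\|y-z\| < \infty$. Since both $x$ and $\mathcal{R}_x(u)$ lie in $\mathcal{M}$, we get $\|\mathcal{R}_x(u) - x\| \leq D < D\|u\|$ and, by the triangle inequality, $\|\mathcal{R}_x(u) - x - u\| \leq D + \|u\| \leq (D+1)\|u\|^2$. Setting $\alpha := \max\{1 + C/2,\; D\}$ and $\beta := \max\{C/2,\; D+1\}$ closes the argument. The main technical care lies in reconciling the fine Taylor bound for small $\|u\|$ with the crude diameter bound for large $\|u\|$, and it is precisely the compactness of $\mathcal{M}$ — through both the boundedness of the second derivative on the unit ball bundle and the finiteness of the diameter — that allows us to patch the two regimes into uniform constants $\alpha,\beta$.
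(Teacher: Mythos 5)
Your proposal is correct: the paper itself does not prove this proposition but cites it from \cite{grocf}, and your argument (second-order Taylor expansion with integral remainder using $\mathcal{R}_x(0_x)=x$ and $D\mathcal{R}_x(0_x)=\mathrm{id}$, uniform bound on $D^2\mathcal{R}$ over the compact unit-ball bundle, and a diameter-based patch for $\|u\|>1$) is essentially the standard argument given in that reference. The only point worth making explicit is that the large-$\|u\|$ regime and the Taylor step both rely on $\mathcal{R}$ being smooth and globally defined on all of $T\mathcal{M}$, which is exactly what Definition \ref{def-retr} of the paper assumes, so your two-regime patching with $\alpha=\max\{1+C/2,\,D\}$ and $\beta=\max\{C/2,\,D+1\}$ is legitimate as stated.
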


\subsection{Stationary point and proximal operator}
Next we give the definition of $\epsilon$-stationary point for problem \eqref{prob:composite}. Let us first introduce  the Lagrangian function $l:\mathcal{M}\times \mathbb{R}^m \times \mathbb{R}^r \rightarrow {\bar{\mathbb{R}}}$ of \eqref{prob:composite}:
\begin{equation}\label{def:lagrangian}
    l(x,y,\lambda) = f(x) + h(y) - \left<\lambda, \mathcal{A}x -y \right>,
\end{equation}
where $\lambda$ is the corresponding Lagrangian multiplier. 
Based on the KKT condition, we give the definition of $\epsilon$-stationary point for \eqref{prob:composite}:
\begin{definition}\label{def:epsilon-deter}
We say $x\in\mathcal{M}$ is an $\epsilon$-stationary point of \eqref{prob:composite} if there exists $y,z\in\mathbb{R}^m$ such that 
\begin{equation}\label{eq:epsi-kkt}
    \left\{
\begin{aligned}
    \left\|\mathcal{P}_{T_x\mathcal{M}}\left(\nabla f(x) - \mathcal{A}^*\lambda \right) \right\|\leq \epsilon, \\
   \mathrm{dist}( -\lambda, \partial h(y)) \leq 
\epsilon, \\
   \| \mathcal{A}x - y \| \leq \epsilon.
\end{aligned}
    \right.
\end{equation}
In other words, $(x,y,\lambda)$ is an $\epsilon$-KKT point pair of \eqref{prob:composite}. 
\end{definition}
{Note that setting $\epsilon = 0$ gives the KKT condition of problem \eqref{prob:composite}.}

The following lemma gives the definition of proximal operator for convex function, and the related property. 
\begin{lemma}\cite[Lemma 3.3]{bohm2021variable}
    Let $h$ be a convex function. The proximal operator of $h$ with parameter $\mu>0$ is given by \begin{equation}\label{def:prox}
   \prox_{\mu h}(y) = \arg\min_{z\in\mbR^m} \left\{h(z) + \frac{1}{2\mu }\|z- y\|^2 \right\}.
   \end{equation}
 Moreover, if $h$ is $\ell_h$-Lipschitz continuous, it holds that
\begin{equation} \label{h bound}
\|x - \prox_{\mu h}(x) \| \leq \mu \ell_h. 
\end{equation}
\end{lemma}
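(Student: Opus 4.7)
The plan is to derive the bound \eqref{h bound} from the first-order optimality condition of the proximal subproblem, combined with the standard fact that subgradients of an $\ell_h$-Lipschitz convex function are bounded in norm by $\ell_h$. The definitional part of the lemma needs no argument, so the only content is the inequality.

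First I would write down the optimality condition characterizing the unique minimizer in \eqref{def:prox}. Since $h$ is convex and the quadratic penalty $z \mapsto \frac{1}{2\mu}\|z-x\|^2$ is smooth and strongly convex, Fermat's rule and subdifferential calculus give
\begin{equation*}
0 \in \partial h\bigl(\prox_{\mu h}(x)\bigr) + \tfrac{1}{\mu}\bigl(\prox_{\mu h}(x) - x\bigr),
\end{equation*}
so the residual $g := \tfrac{1}{\mu}\bigl(x - \prox_{\mu h}(x)\bigr)$ lies in $\partial h(\prox_{\mu h}(x))$.

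Next I would invoke the elementary bound on subgradients of a Lipschitz convex function: if $h$ is $\ell_h$-Lipschitz on $\mbR^m$, then every $g \in \partial h(z)$ satisfies $\|g\| \leq \ell_h$. This follows from the subgradient inequality $h(z+tv) \geq h(z) + t\langle g, v\rangle$ combined with $h(z+tv) - h(z) \leq \ell_h t\|v\|$; dividing by $t>0$ and taking the supremum over unit $v$ yields $\|g\| \leq \ell_h$. Applying this with $z = \prox_{\mu h}(x)$ to the specific subgradient $g$ identified above gives $\|x - \prox_{\mu h}(x)\| = \mu \|g\| \leq \mu \ell_h$, which is exactly \eqref{h bound}.

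I do not anticipate any real obstacle: the only ingredients are the optimality condition for a convex proximal minimization and the subgradient norm bound for Lipschitz convex functions, both of which are standard. The one point worth emphasizing, which is the conceptual content of the lemma, is that the proximal residual $x - \prox_{\mu h}(x)$ is, up to the scaling factor $\mu$, a subgradient of $h$ at the proximal point; this identification is what converts a Lipschitz bound on $h$ into a quantitative bound on how far the proximal step can move.
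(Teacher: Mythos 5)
Your proof is correct and is the standard argument: identify the scaled proximal residual as a subgradient of $h$ at the proximal point via the first-order optimality condition, then apply the subgradient norm bound for Lipschitz convex functions. The paper does not reproduce a proof (it cites the lemma from \cite{bohm2021variable}), but your argument is the canonical one and obtains the tight constant $\mu\ell_h$; note that the alternative route through the defining inequality $h(\prox_{\mu h}(x)) + \frac{1}{2\mu}\|\prox_{\mu h}(x)-x\|^2 \le h(x)$ would only yield the weaker bound $2\mu\ell_h$, so the subgradient identification is genuinely needed.
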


\section{Riemannian ADMM}
Throughout this paper, we make the following assumptions. 

\begin{assumption}\label{assum}
The following assumptions hold:
\begin{itemize}
  \item[(i)] The manifold $\Mcal$ is a compact Riemannian submanifold embedded in $\mathbb{R}^n$. $f(x)$ and $h(x)$ are both lower bounded, and let $f_*=\inf_x f(x)>-\infty$ and  $h_*=\inf_x h(x)>-\infty$.
  \item[(ii)] The function $f$ is  $\ell_f$-Lipschitz coninuous and $\ell_{\nabla f}$-smooth on $\Mcal$. The function $h$ is convex and $\ell_h$-Lipschitz continuous.
  \item[(iii)] The linear mapping $\mathcal{A}$ satisfies $\|\mathcal{A}\|_{op} \leq \sigma_A$. 
\end{itemize}
\end{assumption}
Assumption \ref{assum} (i) includes many common manifolds, such as sphere, Stiefel manifold and Oblique manifold, etc. This implies $\Mcal$ is a bounded and closed set, i.e., there exists a finite constant $\mathcal{D}$ such that $\mathcal{D} = \max_{x,y\in \Mcal} \| x - y  \|$. Assumption \ref{assum} (ii) implies that for any $x,y\in\Mcal$, it holds that
\begin{equation}
    \|\nabla f(x) - \nabla f(y)\| \leq \ell_{\nabla f}\|x - y\|.
\end{equation}
\subsection{Challenges in bounding dual updates in Riemannian ADMM}

The convergence analysis of ADMM algorithms for nonconvex problems typically relies on establishing a sufficient descent property of the augmented Lagrangian (AL) function, which serves as a potential function. However, since ADMM belongs to the class of primal-dual algorithms, the update of the dual variables introduces an ascent term, commonly expressed as $\|\lambda_{k+1} - \lambda_k\|$. Controlling this ascent term is critical for ensuring convergence.

In the Euclidean setting, this term is usually bounded via the optimality condition of the subproblem associated with the primal variable, see, for example, \cite{wang2019global,guo2017convergence,hien2022inertial,boct2020proximal}. Consider the iterative scheme in \eqref{iter:madmm}. When the manifold $\Mcal = \mathbb{R}^n$, the optimality condition for the $x$-subproblem at iteration $k$ reads  
\[
\nabla f(x_{k+1}) + \rho \mathcal{A}^*(\mathcal{A} x_{k+1} - y_k) - \mathcal{A}^* \lambda_k = 0.
\]  
Taking differences of the optimality conditions at iterations $k$ and $k+1$, one can derive a bound on $\|\lambda_{k+1} - \lambda_k\|$ by the difference of corresponding primal iterates using
 the Lipschitz continuity of $\nabla f$. However, in the Riemannian setting, the corresponding optimality condition involves projections onto tangent spaces:  
\[
\mathcal{P}_{T_{x_{k+1}} \Mcal} \left( \nabla f(x_{k+1}) + \rho \mathcal{A}^*(\mathcal{A} x_{k+1} - y_k) - \mathcal{A}^* \lambda_k \right) = 0.
\]  
Due to the nonlinear geometry of the manifold, the tangent space changes at each iteration, making it impossible to directly apply the difference technique used in the Euclidean case. This leads to significant challenges in bounding the difference between dual variables, as the projection operators prevent the necessary alignment of optimality conditions across iterations.

A common workaround is to smooth the nonsmooth term associated with the $y$-variable. By replacing the nonsmooth regularizer $h$ with a smooth approximation $h_{\mu}$, one can leverage the Lipschitz continuity of the gradient $\nabla h_{\mu}$ to bound the difference of multipliers. Specifically, the optimality condition for the smoothed $y$-subproblem becomes  
\[
0 = \nabla h_{\mu}(y_{k+1}) + \rho (\mathcal{A} x_k - y_{k+1}) - \lambda_k.
\]  
Thus the desired bound on the multipliers can be obtained by the Lipschitz continuity of $\nabla h_{\mu}$. Please refer to \cite{li2024riemannian,yuan2024admm} for more details.  However, this approach fundamentally changes the problem structure from nonsmooth optimization to smooth optimization, and various gradient-based methods can be used instead of ADMM. For instance, \cite{peng2022riemannian} developed a Riemannian homotopy smoothing algorithm based on this idea.

In contrast, our approach works directly with the original, nonsmoothed ADMM framework, without introducing any smoothing to the problem. To overcome the challenge posed by the changing tangent spaces and nonsmooth regularizer, we introduce an adaptive strategy for selecting the dual stepsizes and penalty parameters. Specifically, we introduce an adaptive penalty parameter $\rho_k$ and replace the original dual stepsize with $\gamma_k$ in original Riemannian ADMM \eqref{iter:madmm}. Leveraging the properties of the Moreau envelope, we then obtain  
\[
\begin{aligned}
\lambda_{k+1} - \lambda_k &= \gamma_{k+1} (\mathcal{A} x_{k+1} - y_{k+1}) \\
&= \gamma_{k+1} \left( \mathcal{A} x_k - \frac{\lambda_k}{\rho_k} - \mathrm{prox}_{h/\rho_k} \left( \mathcal{A} x_k - \frac{\lambda_k}{\rho_k} \right) \right) + \gamma_{k+1} \mathcal{A}(x_{k+1} - x_k) + \gamma_{k+1} \frac{\lambda_k}{\rho_k}.
\end{aligned}
\]  
The first term can be bounded using properties of the proximal operator and the Moreau envelope, as shown in \eqref{h bound}. To further control $\|\lambda_k\|$, we design the dual stepsize $\gamma_{k+1}$ adaptively, ensuring that the multiplier difference is bounded by the primal iterate difference. More details are referred to Algorithm \ref{alg:plradmm} and Lemma \ref{lem:bound-lambda}.

\subsection{Adaptive Riemannian ADMM}\label{sec:pl-radmm}
We construct the corresponding augmented Lagrangian function:
\begin{equation}
    \mathcal{L}_{\rho}(x,y,\lambda): = f(x) + h(y) - \langle \lambda, \mathcal{A} x - y  \rangle + \frac{\rho}{2} \|\mathcal{A} x - y \|^2.
\end{equation}

Algorithm \ref{alg:plradmm} details the iterative process.  For the update rule of the dual variable $\lambda_{k+1}$, we use a different sequence $\{\gamma_k\}$ to replace original sequence $\{\rho_k\}$, and result in the following update:
\begin{equation}\label{lambda-update}
    \lambda_{k+1}  = \lambda_k - \gamma_{k+1}(\mathcal{A}x_{k+1} - y_{k+1}). 
\end{equation}
Here, we refer to $\gamma_{k+1}$ as the dual step size, which is updated by the following form:
 \begin{equation}\label{iter:beta}
       \gamma_{k+1} = \min\left(\frac{\gamma_0\|\mathcal{A}x_0 -y_0\|\log^22}{\|\mathcal{A}x_{k+1}-y_{k+1}\|(k+1)^2\log(k+2)}, \frac{c_{\gamma}}{k^{1/3} \log^2(k+1)} \right).
       \end{equation}
       Our algorithm alternately updates the variables in the order of $(y, x, \lambda)$, which follows from \cite{huang2019faster}.  The increasing sequence of penalty parameters $\rho_k$ and the dual update  are responsible for continuously enforcing the constraints. The particular choice of dual step sizes $\gamma_k$ in Algorithm \ref{alg:plradmm} ensures that the difference between the dual variables $\lambda_k$ and $\lambda_{k+1}$ remains bounded, which is crucial for the convergence analysis.

\begin{algorithm}[tb]
\caption{Adaptive Riemannian ADMM  for solving \eqref{prob:composite}.}
\label{alg:plradmm}
\textbf{Input}: initial point $x_0,y_0,\lambda_0, \rho_0,\gamma_0$, parameters $c_{\rho},c_{\gamma}$.
\begin{algorithmic}[1]
\FOR{$k = 0,\cdots,K-1$}
\STATE  Update auxiliary variable $y_{k+1}$ via
\begin{equation}\label{eq:update-y-pl}
  y_{k+1} = \arg\min_{y\in \mathbb{R}^d}   \mathcal{L}_{\rho_k}(x_k,y,\lambda_k). 
\end{equation}
\STATE  Denote $\Phi_k(x): = \mathcal{L}_{\rho_k}(x,y_{k+1},\lambda_k)$ and obtain $x_{k+1}$ by single gradient step:
\begin{equation}\label{eq:update-x-pl}
    x_{k+1} = \mathcal{R}_{x_k}(-\tau_k \grad \Phi_k(x_k)).
\end{equation}
\STATE Update the dual step size $\gamma_{k+1}$ via
   \begin{equation}\label{iter:beta-pl}
       \gamma_{k+1} = \min\left(\frac{\gamma_0\|\mathcal{A}x_0 - y_0\|\log^22}{\|\mathcal{A}x_{k+1}-y_{k+1}\|(k+1)^2\log(k+2)},\frac{c_{\gamma}}{k^{1/3} \log^2(k+1)}\right).
       \end{equation}
\STATE Update the dual variable $\lambda_{k+1}$ via \begin{equation}\label{iter:z}
       \lambda_{k+1} = \lambda_k - \gamma_{k+1}(\mathcal{A}x_{k+1} - y_{k+1} ).
   \end{equation}
\ENDFOR
\end{algorithmic}
\textbf{Output}: $(x_K,y_K,\lambda_K)$.
\end{algorithm}

Thanks to the careful choice of the penalty parameter and step size, we establish the following key lemma, which ensures that the difference between successive dual variables, $\|\lambda_{k+1} - \lambda_k\|$, can be effectively controlled.

\begin{lemma}\label{lem:bound-lambda}
Let Assumptions \ref{assum} hold. Suppose the sequence $\left\{x_k, y_k, \lambda_k\right\}_{k=1}^K$ is generated by the Algorithm \ref{alg:plradmm}.   The following inequality holds
\begin{equation}\label{bound-lambda}
  \|\lambda_{k+1}\| \leq \frac{\gamma_0 \pi^2 }{6} \|\mathcal{A}x_0 - y_0\| = \lambda_{\max}.
\end{equation}
Moreover we have that
\begin{equation}\label{eq:diff-lambda-bound2}
\begin{aligned}
\left\|\lambda_{k+1}-\lambda_k\right\| \leq \frac{\gamma_{k+1}}{\rho_k} (\ell_h + \lambda_{\max})  +\gamma_{k+1}\sigma_A\|x_{k+1} - x_k \|.
\end{aligned}
\end{equation}

\end{lemma}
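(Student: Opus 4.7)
The plan is to prove the two inequalities sequentially. The first bound $\|\lambda_{k+1}\| \leq \lambda_{\max}$ will come from unrolling the dual recursion \eqref{iter:z} and exploiting the summability built into the first branch of the step-size rule \eqref{iter:beta-pl}. The second bound on $\|\lambda_{k+1}-\lambda_k\|$ will follow by rewriting $\mathcal{A} x_{k+1}-y_{k+1}$ in terms of the proximal form of the $y$-update, and then applying the Moreau-type bound \eqref{h bound} together with the first inequality.

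For the first bound I would assume $\lambda_0 = 0$ (the natural ADMM initialization) and unroll \eqref{iter:z} to get $\|\lambda_{k+1}\| \leq \sum_{j=0}^{k} \gamma_{j+1}\|\mathcal{A} x_{j+1}-y_{j+1}\|$. The first branch of the $\min$ defining $\gamma_{j+1}$ in \eqref{iter:beta-pl} yields, by construction,
\begin{equation*}
\gamma_{j+1}\|\mathcal{A} x_{j+1}-y_{j+1}\| \leq \frac{\gamma_0\, \|\mathcal{A} x_0 - y_0\|\,\log^2 2}{(j+1)^2\, \log(j+2)}.
\end{equation*}
Using $\log(j+2) \geq \log 2$ for $j \geq 0$ and the series identity $\sum_{j\geq 0}(j+1)^{-2} = \pi^2/6$ then delivers the stated bound $\lambda_{\max} = \tfrac{\gamma_0\pi^2}{6}\|\mathcal{A} x_0-y_0\|$.

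For the second bound I would first use the first-order optimality of \eqref{eq:update-y-pl} to recast the $y$-update in closed form as $y_{k+1} = \mathrm{prox}_{h/\rho_k}(\mathcal{A} x_k - \lambda_k/\rho_k)$. Adding and subtracting $\mathcal{A} x_k$ and $\lambda_k/\rho_k$ gives the decomposition
\begin{equation*}
\mathcal{A} x_{k+1} - y_{k+1} = \Bigl[(\mathcal{A} x_k - \tfrac{\lambda_k}{\rho_k}) - \mathrm{prox}_{h/\rho_k}(\mathcal{A} x_k - \tfrac{\lambda_k}{\rho_k})\Bigr] + \tfrac{\lambda_k}{\rho_k} + \mathcal{A}(x_{k+1}-x_k).
\end{equation*}
Applying \eqref{h bound} with $\mu = 1/\rho_k$ bounds the bracketed term by $\ell_h/\rho_k$; the first part of the lemma bounds the middle term by $\lambda_{\max}/\rho_k$; and Assumption \ref{assum}(iii) bounds the last term by $\sigma_A \|x_{k+1}-x_k\|$. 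Multiplying through by $\gamma_{k+1}$ and using $\|\lambda_{k+1}-\lambda_k\| = \gamma_{k+1}\|\mathcal{A} x_{k+1}-y_{k+1}\|$ yields \eqref{eq:diff-lambda-bound2}.

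The main obstacle is not the algebra itself but the design logic that makes it work: the step-size rule \eqref{iter:beta-pl} is tailored precisely so that $\gamma_{j+1}\|\mathcal{A} x_{j+1}-y_{j+1}\|$ is summable \emph{regardless} of how the feasibility residual $\|\mathcal{A} x_{j+1}-y_{j+1}\|$ behaves, and the proximal identity for $y_{k+1}$ is what allows a $\lambda_k/\rho_k$ term to surface in a form the uniform bound can control. The two parts must therefore be established in the stated order---first the uniform bound, then the one-step difference---since the latter uses the former to handle $\|\lambda_k/\rho_k\|$.
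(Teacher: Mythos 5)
Your proposal is correct and follows essentially the same route as the paper: unrolling the dual update and using the first branch of the $\gamma$-rule to get a summable series bounded via $\sum_{l\ge 1}(l+1)^{-2}=\pi^2/6$, then writing $y_{k+1}=\mathrm{prox}_{h/\rho_k}(\mathcal{A}x_k-\lambda_k/\rho_k)$ and splitting $\mathcal{A}x_{k+1}-y_{k+1}$ into the Moreau-envelope term, $\lambda_k/\rho_k$, and $\mathcal{A}(x_{k+1}-x_k)$, bounded by \eqref{h bound}, the first inequality, and $\sigma_A$ respectively. Your explicit assumption $\lambda_0=0$ is also implicitly used (and needed) in the paper's own unrolling, so there is no substantive difference.
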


\begin{proof}[Proof of Lemma \ref{lem:bound-lambda}]
We first show that $\lambda_k$ is bounded. By step 5 in Algorithm \ref{alg:plradmm}, we have
  \begin{equation}\label{bound:lambda}
\begin{aligned}
    \|\lambda_{k+1}\| & \leq \sum_{l=1}^{k+1} \gamma_l \|\mathcal{A}x_l - y_l\|
 \leq \sum_{l=1}^{\infty}\gamma_l \|\mathcal{A}x_l - y_l\| \\
& \leq  \gamma_0 \|\mathcal{A}x_0 - y_0\| \log^2 2 \sum_{l=1}^{\infty} \frac{1}{(l+1)^2\log(l+2)} \leq  \frac{\gamma_0 \pi^2 }{6} \|\mathcal{A}x_0 - y_0\| = \lambda_{\max},
\end{aligned}
\end{equation}
where the last inequality utilize that $\log(l+2)>1$ for $l\geq 1$ and $\sum_{l=1}^{\infty} \frac{1}{(l+1)^2} = \frac{\pi^2}{6}$.  
Again using step 5 in Algorithm \ref{alg:plradmm}, one can bounds the difference between $\lambda_{k+1}$ and $\lambda_k$:
\begin{equation}
  \begin{aligned}
 \| \lambda_{k+1} - \lambda_k\| & = \gamma_{k+1} \| \mathcal{A}x_{k+1} - y_{k+1}\| \\
  & = \gamma_{k+1} \| \mathcal{A}x_{k} - y_{k+1} - \frac{\lambda_k}{\rho_k}\| + \gamma_{k+1} \|\mathcal{A}\|_{op}\|x_{k+1} - x_k \| + \gamma_{k+1} \| \frac{ \lambda_k}{\rho_k}\|\\
    & \leq \gamma_{k+1}  \| (\mathcal{A}x_{k}  - \frac{\lambda_k}{\rho_k} -\mathrm{prox}_{\frac{h}{\rho_k}}(\mathcal{A}x_k - \frac{\lambda_k}{\rho_k}) )\|+\gamma_{k+1}\sigma_A\|x_{k+1} - x_k \| +  \gamma_{k+1} \| \frac{ \lambda_k}{\rho_k}\| \\
  & \leq \frac{\gamma_{k+1}}{\rho_k} (\ell_h + \lambda_{\max})  +\gamma_{k+1}\sigma_A\|x_{k+1} - x_k \|, 
  \end{aligned}
\end{equation}
where the first inequality uses $\|\mathcal{A}\|_{op}\leq \sigma_A$ from Assumption \ref{assum} and the update rule of $y_{k+1}$ in \eqref{eq:update-y-pl}, the second inequality follows from \eqref{h bound} and  \eqref{bound:lambda}. The proof is completed.

\end{proof}

Now we provide the main convergence result of our algorithm ARADMM. 
In particular, we show that under certain assumptions, the ARADMM can achieve a oracle complexity of $\mathcal{O}(\epsilon^{-3})$. 
\begin{theorem}\label{cor:relation-optima-x-pl}
Suppose that Assumptions \ref{assum} hold. Let the sequence $\left\{x_k, y_k, \lambda_k\right\}_{k=1}^K$ be generated by Algorithm \ref{alg:plradmm}.  Let us denote $ \bar{\lambda}_{k} = \lambda_{k-1} - \rho_{k-1} (\mathcal{A}x_{k} -y_{k})$, $\rho_k  = c_{\rho}k^{1/3}$ and $\tau_k = c_{\tau}k^{-1/3}$, where $c_{\tau},c_{\rho}$ satisfy
 \begin{equation}\label{condi:alpha-1}
   c_{\tau} \leq \min\{  \frac{1}{C}, \frac{1}{M}, \frac{1}{16 c_{\gamma}\alpha^2 \sigma_A^2}\}, ~~c_{\rho} \geq 1,
 \end{equation}
 where 
 $C: = (\alpha L_p + \zeta) G + \alpha  (\ell_{\nabla f} + \sigma_{A}^2 ),~~M:=\alpha^2 (\ell_{\nabla f} +  \sigma_A^2 )  +2 G \beta,$
  and $G,\alpha,\beta$ are given in  \eqref{eq:retrac-lipscitz} and Lemma \ref{Euclidean vs manifold},  $L_p,\zeta$ are defined in Lemma \ref{lem:eucli-rieman-lipsch}. Then for any given positive integer $K>2$, there exists $\kappa\in [\lceil K/2\rceil,K]$ such that 
\begin{equation}\label{eq:theorem:kkt-bound}
\begin{aligned}
    \| \mathcal{P}_{T_{x_\kappa}\mathcal{M}} ( -\mathcal{A}^* \bar{\lambda}_\kappa) + \text{grad}f(x_\kappa) \| & \leq \frac{8\sqrt{\mathcal{G}}}{\sqrt{c_{\tau}}} (K+1)^{-1/3}, \\
    \text{dist}(-\bar{\lambda}_\kappa , \partial h(y_\kappa)) & \leq \frac{4\sigma_A\alpha\sqrt{\mathcal{G}}}{\sqrt{c_{\tau}}} (K+1)^{-1/3}, \\
    \|\mathcal{A}x_{\kappa} - y_{\kappa}\| &\leq 8\sigma_A\alpha\sqrt{c_{\tau}\mathcal{G}} (K-2)^{-2/3}   + \frac{2(\ell_h+\lambda_{\max})}{c_{\rho}}(K-2)^{-1/3}.
    \end{aligned}
\end{equation}  
where $\mathcal{G}$ is a constant given in the proof. 
\end{theorem}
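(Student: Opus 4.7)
The plan is a standard ``descent $\Rightarrow$ averaging $\Rightarrow$ KKT bounds'' strategy, with the adaptive choice of $\gamma_{k+1}$ and Lemma \ref{lem:bound-lambda} used in a crucial way to absorb the dual-ascent terms that normally obstruct a single-loop analysis.

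\emph{Step 1: One-step descent on a Lyapunov potential.} I will build a potential based on $\mathcal{L}_{\rho_k}(x_k,y_k,\lambda_k)$ and decompose its change over one iteration into the four block updates. The $y$-update contributes a descent of the form $-\tfrac{\rho_k}{2}\|y_{k+1}-y_k\|^2$ by strong convexity of the $y$-subproblem. The $x$-update, combined with the retraction estimates \eqref{eq:retrac-lipscitz} and the Euclidean-versus-Riemannian smoothness quantities $G,\alpha,\beta$ and $L_p,\zeta$ appearing in the theorem's constants $C,M$, yields a descent of order $-\tfrac{\tau_k}{2}\|\grad\Phi_k(x_k)\|^2$ provided $c_\tau\le \min\{1/C,1/M\}$. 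The $\lambda$- and $\rho$-updates create ascents $\tfrac{1}{\gamma_{k+1}}\|\lambda_{k+1}-\lambda_k\|^2$ and $\tfrac{\rho_{k+1}-\rho_k}{2}\|\mathcal{A}x_{k+1}-y_{k+1}\|^2$, which I will control with Lemma \ref{lem:bound-lambda} and the proximal identity $\|\mathcal{A}x_k-y_{k+1}\|\le(\ell_h+\lambda_{\max})/\rho_k$ from \eqref{h bound}. The $\|x_{k+1}-x_k\|^2$ terms that emerge are bounded via $\|x_{k+1}-x_k\|^2\le \alpha^2\tau_k^2\|\grad\Phi_k(x_k)\|^2$ and absorbed into the $x$-descent exactly when $c_\tau\le 1/(16 c_\gamma\alpha^2\sigma_A^2)$, recovering \eqref{condi:alpha-1}. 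What remains are residual terms of order $\gamma_{k+1}/\rho_k^2$ and $1/\rho_k^2$, which are summable thanks to the adaptive schedule $\rho_k\sim k^{1/3}$ and the $\log^2$ factor built into $\gamma_{k+1}$.

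\emph{Step 2: Telescoping and summability.} Summing the one-step inequality for $k=0,\ldots,K-1$, using the lower-boundedness of $f$ and $h$ from Assumption \ref{assum}(i) together with the uniform bound $\|\lambda_k\|\le\lambda_{\max}$ from Lemma \ref{lem:bound-lambda} to bound the potential from below, and collecting the summable residuals into a finite constant $\mathcal{G}$, I obtain
\begin{equation*}
\sum_{k=0}^{K-1}\tau_k\|\grad\Phi_k(x_k)\|^2 \;\le\; \mathcal{G}.
\end{equation*}

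\emph{Step 3: Pigeonhole over the second half.} Since $\tau_k=c_\tau k^{-1/3}$, we have $\sum_{k=\lceil K/2\rceil-1}^{K-1}\tau_k \gtrsim c_\tau (K+1)^{2/3}$, so there exists $k^\star$ in that range with $\|\grad\Phi_{k^\star}(x_{k^\star})\|^2\le 4\mathcal{G}/(c_\tau(K+1)^{2/3})$. Setting $\kappa:=k^\star+1\in[\lceil K/2\rceil,K]$ gives $\|\grad\Phi_{\kappa-1}(x_{\kappa-1})\|\le 2\sqrt{\mathcal{G}/c_\tau}\,(K+1)^{-1/3}$ and, via the retraction bound, $\|x_\kappa-x_{\kappa-1}\|\le \alpha\tau_{\kappa-1}\|\grad\Phi_{\kappa-1}(x_{\kappa-1})\|$.

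\emph{Step 4: The three KKT quantities at iteration $\kappa$.} For the feasibility bound, combine $\|\mathcal{A}x_{\kappa-1}-y_\kappa\|\le(\ell_h+\lambda_{\max})/\rho_{\kappa-1}$ with the triangle inequality and $\|\mathcal{A}(x_\kappa-x_{\kappa-1})\|\le\sigma_A\alpha\tau_{\kappa-1}\|\grad\Phi_{\kappa-1}(x_{\kappa-1})\|$, which yields the two stated terms since $\tau_{\kappa-1}\|\grad\Phi_{\kappa-1}(x_{\kappa-1})\|\lesssim (K-2)^{-2/3}$ and $1/\rho_{\kappa-1}\lesssim(K-2)^{-1/3}$. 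For the subdifferential bound, the first-order optimality of the $y$-subproblem gives $-\tilde\lambda_\kappa\in\partial h(y_\kappa)$ with $\tilde\lambda_\kappa=\lambda_{\kappa-1}-\rho_{\kappa-1}(\mathcal{A}x_{\kappa-1}-y_\kappa)$; then $\|\bar\lambda_\kappa-\tilde\lambda_\kappa\|=\rho_{\kappa-1}\|\mathcal{A}(x_\kappa-x_{\kappa-1})\|$ and the product $\rho_{\kappa-1}\tau_{\kappa-1}\le c_\rho c_\tau$ leaves a clean $O(K^{-1/3})$ rate. For the first bound, use the identity $\grad\Phi_{\kappa-1}(x_\kappa)=\mathcal{P}_{T_{x_\kappa}\mathcal{M}}(\nabla f(x_\kappa)-\mathcal{A}^\ast\bar\lambda_\kappa)$, add and subtract $\grad\Phi_{\kappa-1}(x_{\kappa-1})$, and invoke smoothness of $\Phi_{\kappa-1}$ (with Lipschitz constant of order $\ell_{\nabla f}+\rho_{\kappa-1}\sigma_A^2$); since $\rho_{\kappa-1}\|x_\kappa-x_{\kappa-1}\|\lesssim K^{1/3}\cdot K^{-2/3}=K^{-1/3}$, the correction is of the same order as the base term and the constants combine to give the factor $8\sqrt{\mathcal{G}}/\sqrt{c_\tau}$.

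\emph{Main obstacle.} The heart of the argument is Step~1: the constraint $c_\tau\le 1/(16 c_\gamma\alpha^2\sigma_A^2)$ must be tight enough to absorb the $\|x_{k+1}-x_k\|^2$ contribution coming from the dual bound \eqref{eq:diff-lambda-bound2} into the $x$-descent, while simultaneously the residual terms $\gamma_{k+1}/\rho_k^2$ and $1/\rho_k^2$ must sum to something finite. This is exactly why the adaptive dual step $\gamma_{k+1}$ in \eqref{iter:beta-pl} was designed with the specific $\log^2$ normalization and the $k^{-1/3}$ cap. Showing that all these constants balance simultaneously, and that the final constant $\mathcal{G}$ does not depend on $K$, is the delicate bookkeeping on which the whole $\mathcal{O}(\epsilon^{-3})$ rate rests.
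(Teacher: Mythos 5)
Your proposal follows essentially the same architecture as the paper's own proof: a one-step descent lemma on $\mathcal{L}_{\rho_k}$ (the paper's Lemma \ref{lem:decent-pl}), absorption of the dual-ascent and $\|x_{k+1}-x_k\|^2$ terms using Lemma \ref{lem:bound-lambda} and the retraction estimate (the paper's Theorem \ref{them:rate}), a pigeonhole argument over $k\in[\lceil K/2\rceil, K]$ giving $\|\grad\mathcal{L}_{\rho_{\kappa-1}}(x_{\kappa-1},y_\kappa,\lambda_{\kappa-1})\|^2\lesssim \mathcal{G}/(c_\tau(K+1)^{2/3})$, and finally the translation of that gradient bound into the three KKT residuals (the paper's Lemma \ref{lem:relation-optima-x}). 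All the structural ideas — the identity $\grad_x\mathcal{L}_{\rho_{k-1}}(x_k,y_k,\lambda_{k-1})=\mathcal{P}_{T_{x_k}\Mcal}(\nabla f(x_k)-\mathcal{A}^*\bar\lambda_k)$, the $y$-optimality giving $-\tilde\lambda_k\in\partial h(y_k)$, the feasibility bound through $\|\lambda_k-\lambda_{k-1}\|/\gamma_k$, and the roles of $c_\tau\leq 1/(16c_\gamma\alpha^2\sigma_A^2)$ and the $\log^2$-normalized dual step — match the paper.

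One small bookkeeping slip worth fixing: you list the residual from the $\rho$-update as ``of order $1/\rho_k^2$'' and claim it is summable. In fact $1/\rho_k^2\sim k^{-2/3}$ is \emph{not} summable, and if this were the true residual then $\mathcal{G}$ would grow like $K^{1/3}$ and the final rate would degrade to $\mathcal{O}(\epsilon^{-6})$. The correct residual is of order $1/\rho_k^4\sim k^{-4/3}$: the $\rho$-ascent contributes $\frac{\rho_{k+1}-\rho_k}{2}\|\mathcal{A}x_{k+1}-y_{k+1}\|^2$, and since $\rho_{k+1}-\rho_k\leq \frac{c_\rho^3}{3\rho_k^2}$ while the constant part of $\|\mathcal{A}x_{k+1}-y_{k+1}\|^2$ is $O(1/\rho_k^2)$ via \eqref{h bound} and the $\lambda$-bound, the product is $O(1/\rho_k^4)$, which is summable. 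With that correction your residuals $\gamma_{k+1}/\rho_k^2$ and $1/\rho_k^4$ both sum to a $K$-independent constant, exactly as in the paper's Theorem \ref{them:rate}.
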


    Theorem \ref{cor:relation-optima-x-pl} establishes that, given $\epsilon>0$, our algorithm achieves an iteration complexity of ${\mathcal{O}}(\epsilon^{-3})$. Since Algorithm~\ref{alg:plradmm} is a single-loop method that requires only one gradient evaluation of 
$f$, one computation of the proximal operator of $h$ and retraction operator per iteration—both of which are computationally inexpensive—its overall operation complexity is of the same order as its iteration complexity. 

We would like to clarify that this condition in \eqref{condi:alpha-1}  is not essential.  For example, since $\tau_k = c_\tau k^{-1/3}$, the condition on $c_\tau$ is essentially imposed to ensure that $\tau_k$ satisfies the required bound. This condition can always be satisfied after a sufficiently large number of iterations without requiring $c_\tau$. That is, for any $c_\tau$, there exists an integer $k_0 > 0$ such that for all $k > k_0$, $\tau_k$ satisfies the condition.

\section{Applications and Numerical Experiments}\label{NE}
In this section, we investigate the numerical performance of the proposed algorithm and report comparative results with existing methods. All experiments are performed in MATLAB R2023b on a 64-bit laptop equipped with Intel i9-13900HX CPU and 32.0 GB RAM. We denote the final objective values as ``obj'' and report the CPU time in seconds. All the results are averaged across 10 repeated experiments with random initializations.

\subsection{Sparse Principal Component Analysis}\label{sec:SPCA}
Sparse principal component analysis \cite{jolliffe2003modified,journee2010generalized} is a cornerstone technique for high-dimensional data analysis, identifying principal components with sparse loadings. Given a data matrix $A \in \mathbb{R}^{m\times n}$, the problem of recovering the top $p$ ($p < \min\{m,n\}$) sparse loading vectors is formulated as:
\begin{equation}\label{SPCA}
\min_X~F(X):=-\frac{1}{2}{\rm Tr}(X^\top A^\top AX)+\mu\|X\|_1,~{\rm s.t}.~X\in {\rm St} (n,p),
\end{equation}
where $\mu > 0$ is a regularization parameter, ${\rm Tr}(X)$ denotes the trace of matrix $X$ and ${\rm St}(n,p)=\{X \in \mathbb{R}^{n\times p} : X^\top X = I_p\}$ is the Stiefel manifold.

We evaluate our proposed Algorithm \ref{alg:plradmm} to solve (\ref{SPCA}) and compare it with four ADMM-type methods: SOC \cite{lai2014folding}, MADMM \cite{Kovna2015}, RADMM \cite{li2024riemannian} and OADMM \cite{yuan2024admm}. For ARADMM, we set $\gamma_0=c_{\gamma}=50$, $\rho_0=5$, $c_\rho=1$ and $c_\tau=0.2$. For OADMM, we set $\xi=0.1$ and other parameters are the same as their originals. For the other three algorithms, we follow the same settings as \cite{li2024riemannian}, where the parameters are optimized through grid searches.
All algorithms use identical random initializations and terminate when $|F(X_{k+1})-F(X_k)|\leq 10^{-8}$ or after 500 iterations. The data matrix $A \in \mathbb{R}^{m\times n}$ is generated randomly and the entries follow the standard Gaussian distribution. 
Figure \ref{SPCAFig} shows the objective value versus iterations and CPU time, where $f^*$ is the minimum value across all methods. ARADMM achieves significantly lower objective values and converges faster than other ADMM.

\begin{figure}
	\centering
	\begin{subfigure}[]{0.245\linewidth}
		\centering
		\begin{minipage}{1\linewidth}
			\includegraphics[width=1\linewidth]{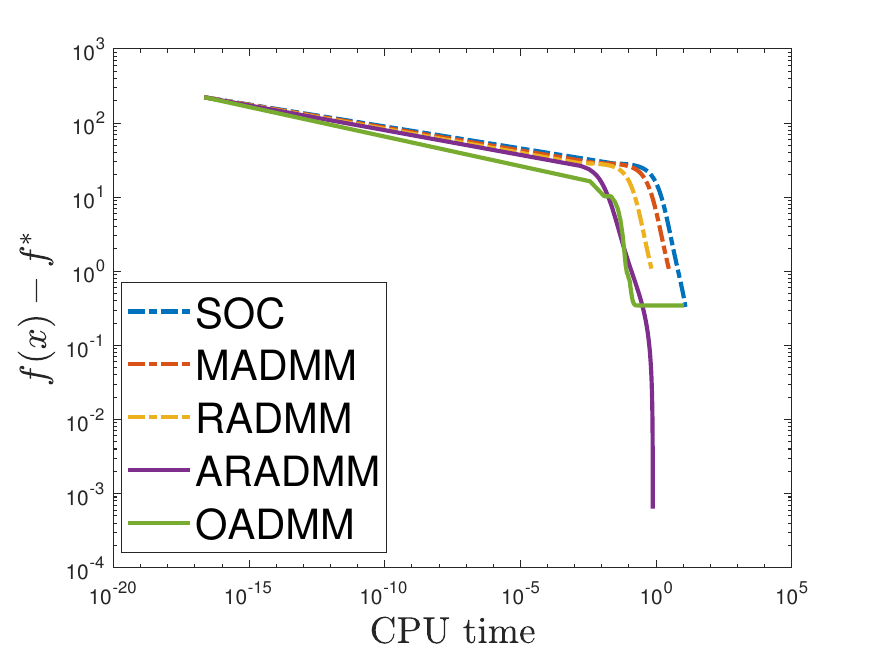}
		\end{minipage}
		\begin{minipage}{1\linewidth}
			\includegraphics[width=1\linewidth]{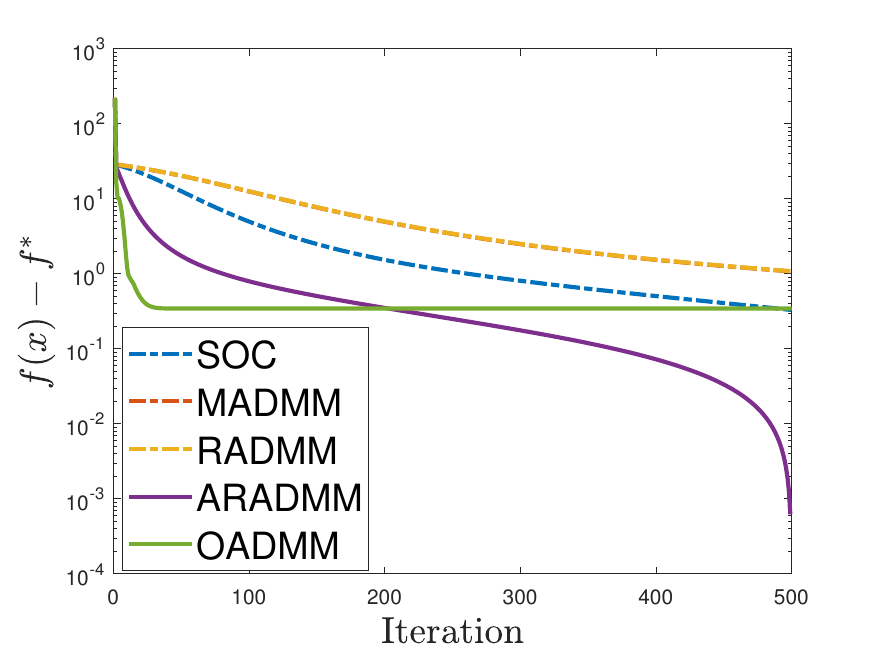}
		\end{minipage}
        \caption{$n=400,p=50$}
	\end{subfigure}
	\begin{subfigure}[]{0.245\linewidth}
		\centering
		\begin{minipage}{1\linewidth}
			\includegraphics[width=1\linewidth]{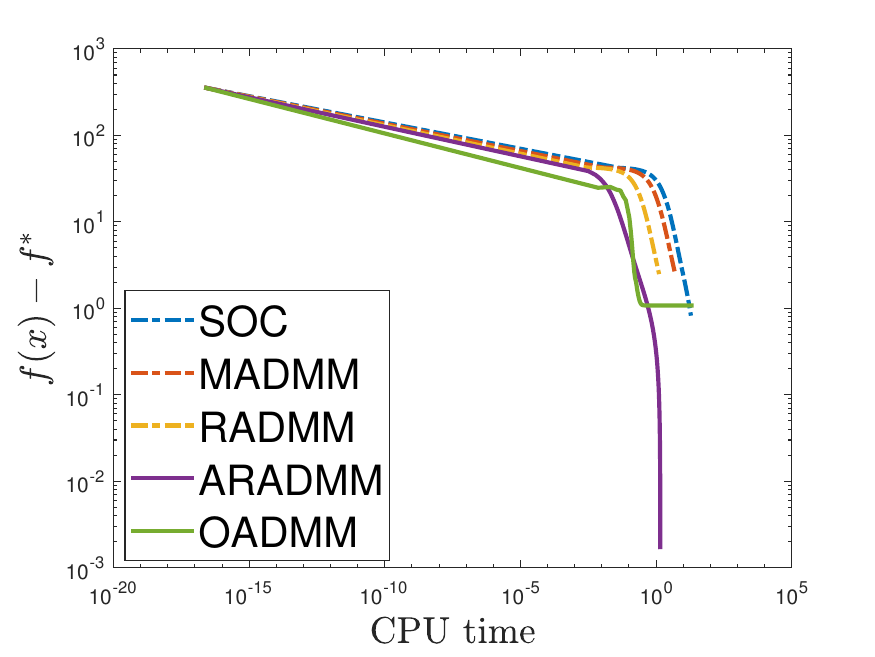}
		\end{minipage}
		\begin{minipage}{1\linewidth}
			\includegraphics[width=1\linewidth]{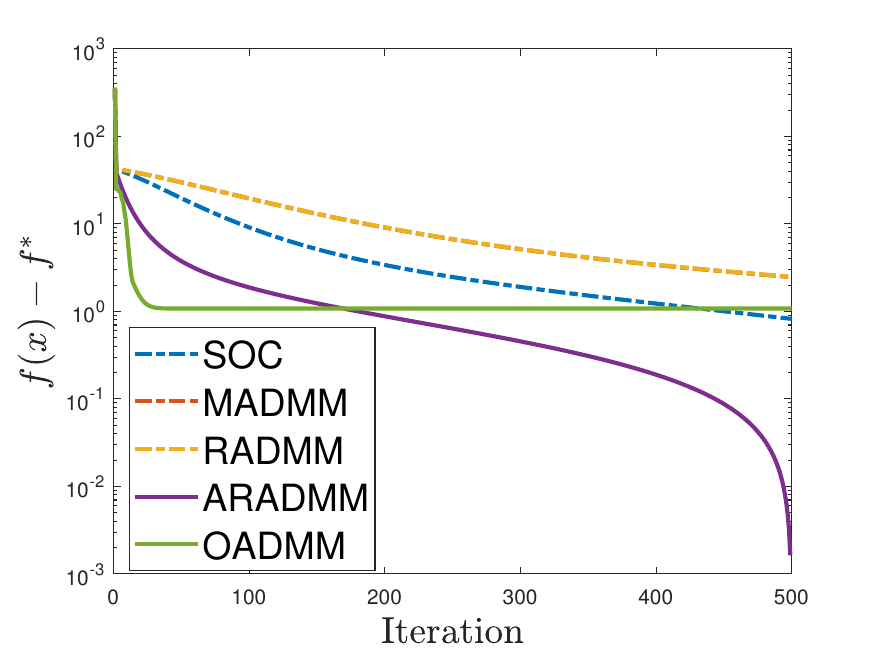}
		\end{minipage}
        \caption{$n=400,p=100$}
	\end{subfigure}
	\begin{subfigure}[]{0.245\linewidth}
		\centering
		\begin{minipage}{1\linewidth}
			\includegraphics[width=1\linewidth]{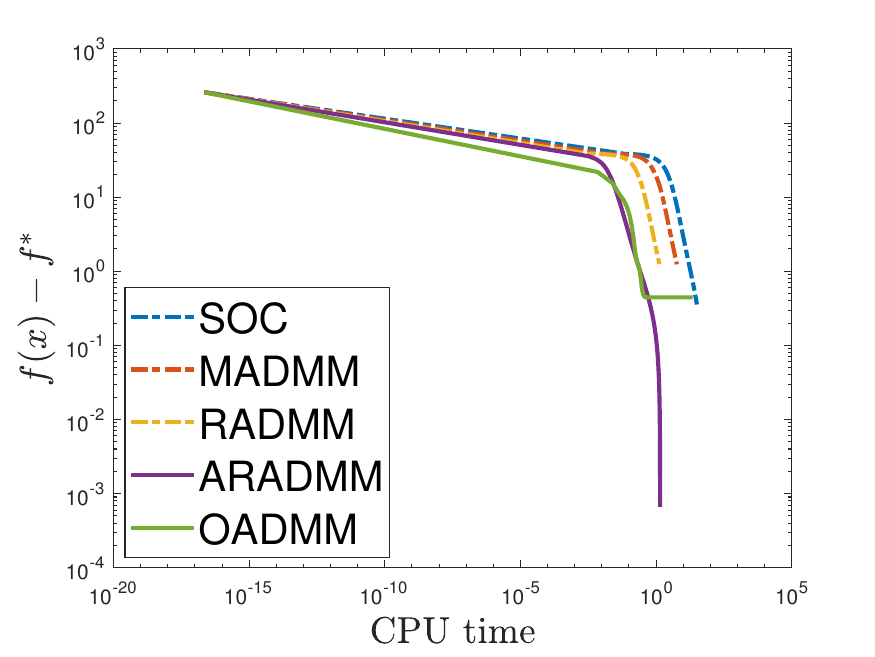}
		\end{minipage}
		\begin{minipage}{1\linewidth}
			\includegraphics[width=1\linewidth]{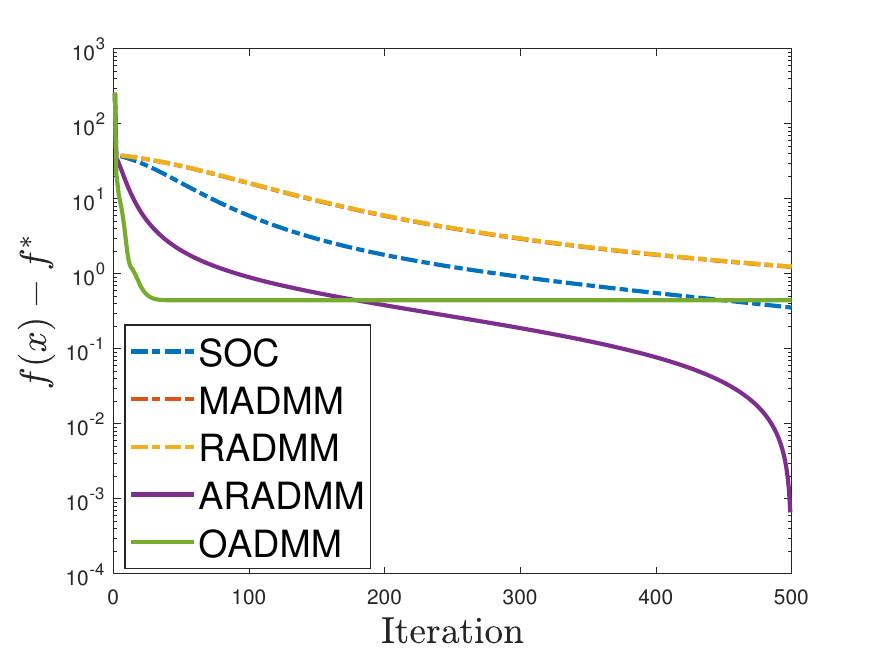}
		\end{minipage}
        \caption{$n=800,p=50$}
	\end{subfigure}
	\begin{subfigure}[]{0.245\linewidth}
		\centering
		\begin{minipage}{1\linewidth}
			\includegraphics[width=1\linewidth]{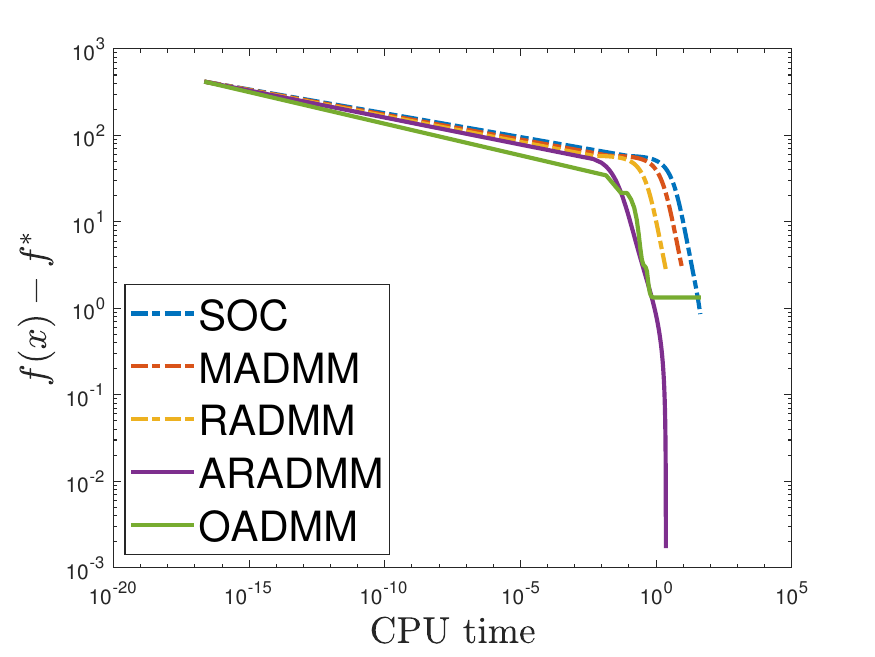}
		\end{minipage}
		\begin{minipage}{1\linewidth}
			\includegraphics[width=1\linewidth]{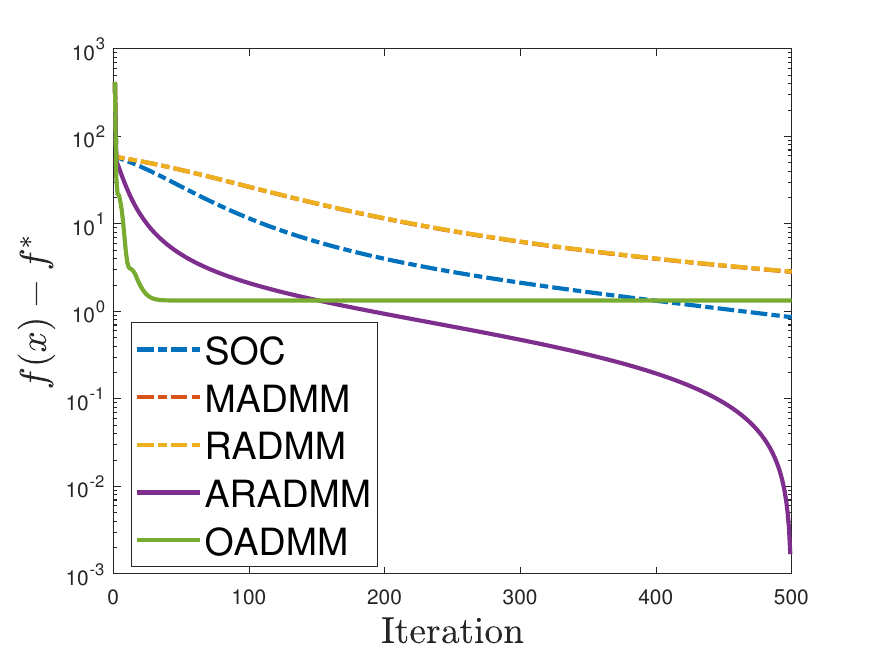}
		\end{minipage}
        \caption{$n=800,p=100$}
	\end{subfigure}
	\caption{Comparison with ADMM-type methods for solving (\ref{SPCA}) with different $(n,p)$, $m=n$ and $\mu=0.01$.}
	\label{SPCAFig}
\end{figure}

We also compare ARADMM with the Riemannian subgradient method (RSG) \cite{ferreira1998subgradient,li2021weakly} and the accelerated manifold proximal gradient method (AManPG) \cite{huang2022extension}. We set the step size $\eta_k\equiv 0.005$ for RSG, while the code of AManPG is provided by \cite{huang2022riemannian}. For ARADMM, we set $\gamma_0=c_{\gamma}=10^3$, $\rho_0=10^2$, $c_\rho=1$ and $c_\tau= 10^{-4}$. The termination rules are the KKT conditions with an accuracy tolerance of $10^{-8}$. Table \ref{table-SPCA-amanpg} shows that ARADMM consistently achieves lower objective values at a significantly faster rate than RSG and AManPG, demonstrating both efficiency and solution quality. 

\begin{table}
	\caption{Comparison with RSG and AManPG for solving (\ref{SPCA}) with $\mu=0.01$.}
	\label{table-SPCA-amanpg}
	\centering
	\begin{tabular}{ccccccc}
					\toprule
					Settings & \multicolumn{2}{c}{RSG} & \multicolumn{2}{c}{AManPG}  & \multicolumn{2}{c}{ARADMM} \\ 
					\cmidrule(r){2-3} \cmidrule(r){4-5} \cmidrule(r){6-7} 
					$(n,m,p)$ & Obj & CPU  & Obj & CPU & Obj & CPU  \\ 
					\midrule
					(300,~20,~8) & -3.9517 & 0.7970 & -4.2671 & 0.8555 & {\bf -4.4071} & {\bf 0.4264}\\
					(400,~30,~10) & -5.5595 & 1.1105 & -5.8129 & 1.0796 & {\bf -6.0378} & {\bf 0.5383}\\
					(500,~40,~12) & -7.1168 & 1.9584 & -7.3048 & 1.3820 & {\bf -7.6569} & {\bf 0.6888}\\ 
					(600,~50,~14) & -7.2259 & 2.5911 & -7.4608 & 1.7482 & {\bf -7.9029} & {\bf 0.8701}\\
					\bottomrule
			\end{tabular}
\end{table}

Finally, we compare with two ALM-type algorithms: ALMSSN \cite{zhou2023semismooth} and ALMSRTR \cite{zhang2024riemannian}, which use a semismooth Newton method and a Riemannian trust region method, respectively, to solve the augmented Lagrangian subproblem on manifolds. The codes for these algorithms are obtained from related work. The parameters for ARADMM and the termination rules follow the same settings as in above experiments. From Table \ref{table-SPCA-ALM}, we can see that ARADMM generates sparse solutions faster than ALMSSN and ALMSRTR, especially when $\mu$ is large.

\begin{table}
	\caption{Comparison with ALM-type methods for solving (\ref{SPCA}). Here ``Spa'' denotes the sparsity level, defined as the proportion of entries with a magnitude less than $10^{-4}$.}
	\label{table-SPCA-ALM}
	\centering
	\resizebox{1\textwidth}{!}{\begin{tabular}{cccccccccc}
			\toprule
			Settings & \multicolumn{3}{c}{ALMSSN} & \multicolumn{3}{c}{ALMSRTR}  & \multicolumn{3}{c}{ARADMM} \\ 
			\cmidrule(r){2-4} \cmidrule(r){5-7} \cmidrule(r){8-10} 
			$(n,m,p,\mu)$ & Obj & CPU & Spa & Obj & CPU & Spa & Obj & CPU & Spa \\ 
			\midrule
			(1500,~20,~8,~0.5) & 6.0212 & 1.6349 & 93.10 & 4.0169 & 1.2679 & 98.38 & {\bf 3.9552} & {\bf 0.9417} & {\bf 99.93} \\
			(2000,~40,~10,~0.6) & 8.9390 & 2.7063 & 94.03 & 5.9553 & 1.9281 & 99.85 & {\bf 5.9190} & {\bf 1.3272} & {\bf 99.95} \\
			(2500,~60,~12,~0.8) & {\bf 9.4527} & 2.9387 & 99.61 & 9.5296 & 3.6644 & 99.95 & 9.4747 & {\bf 1.8394} & {\bf 99.96} \\ 
			(3000,~80,~15,~1) & 15.7085 & 4.7114 & 94.65 & {\bf 14.8418} & 4.4791 & 99.96 & 14.8470 & {\bf 2.2669} & {\bf 99.97}\\
			\bottomrule
	\end{tabular}}
\end{table}

\subsection{Regularized Linear Classifier Over Sphere Manifold}
Consider a classification task involving training pairs $\{a_i,b_i\}_{i=1}^N$, where $a_i\in\mathbb{R}^{m}$ and $b_i\in\{-1,1\}$ for all $i \in [N]$. The objective is to estimate a linear classifier parameter $x$ on the sphere manifold ${\rm S}^{m-1}:=\{x\in\mathbb{R}^m:~x^\top x=1\}$ that minimizes a smooth nonconvex loss \cite{zhao2010convex,zhang2023riemannian} with $\ell_1$-regularization:
\begin{equation}\label{rlc}
	\min_{x\in{\rm S}^{m-1}}~\sum_{i=1}^{N}\left(1-\frac{1}{1+\exp(-b_i x^\top a_i)}\right)^2+\mu \|x\|_1,
\end{equation}
For data generation, the true parameter $x$ is sampled from $\mathcal{N}(0, I_m)$. and projected onto $\mathcal{S}^{m-1}$. The features $\{a_i\}_{i=1}^N$ are sampled independently and the labels $b_i$ are set to 1 if $x^\top a_i +\epsilon_i>0$, where noise $\epsilon_i \sim \mathcal{N}(0, \sigma^2)$, and -1, otherwise. All algorithms use the identical random initialization and terminate when $|F(X_{k+1})-F(X_k)|\leq 10^{-8}$ or after 500 iterations, with $\mu=0.2$ fixed in \eqref{rlc}.

We set $\gamma_0 = \rho_0 = c_{\gamma} = 100$, $c_\rho = 1$ and $c_\tau = 0.05$ for ARADMM, and use the same settings as in the SPCA experiments for OADMM. For the other methods, we set penalty parameter $\rho=150$ and step size $\eta=0.01$. From Table \ref{table-RLC}, we can see that ARADMM and MADMM quickly decrease the objective value, whereas both ARADMM and SOC achieve a lower objective value of the outputs. Moreover, ARADMM is more advantageous than existing methods in more challenging scenarios. In short, ARADMM is more efficient in terms of the CPU time and objective value for test instances.

\begin{table}
	\caption{Comparison with ADMM-type methods for solving \eqref{rlc}. }
	\label{table-RLC}
	\centering
	\resizebox{1\textwidth}{!}{\begin{tabular}{ccccccccccc}
			\toprule
			Settings & \multicolumn{2}{c}{SOC} & \multicolumn{2}{c}{MADMM} & \multicolumn{2}{c}{RADMM} & \multicolumn{2}{c}{OADMM} & \multicolumn{2}{c}{ARADMM} \\ 
			\cmidrule(r){2-3} \cmidrule(r){4-5} \cmidrule(r){6-7} \cmidrule(r){8-9} \cmidrule(r){10-11}
			$(m,n,\sigma^2)$ & Obj & CPU & Obj  & CPU & Obj & CPU & Obj & CPU & Obj & CPU\\ 
			\midrule
			(200,~1000,~1) & {\bf 0.7004} & 1.1009 & 0.7340 & 0.0729 & 0.7340 & 0.0876 & 0.7282 & 0.1024 & 0.7370 & {\bf 0.0507} \\
			(400,~5000,~5) & 0.6877 & 2.2876 & 0.8267 & 0.1701 & 0.8267 & 0.1864 & 0.7288 & 0.2799 & {\bf 0.6875} & {\bf 0.1073}\\
			(600,~10000,~10) & 0.6469 & 7.8231 & 0.9216 & 0.6692 & 0.9216 & 0.7153 & 0.6665 & 1.0664 & {\bf 0.6464} & {\bf 0.4197} \\ 
			(800,~20000,~50) & 0.6606 & 30.7367 & 1.0398 & 2.2673 & 1.0396 & 2.7062 & 0.6871 & 2.7062 & {\bf 0.6602} & {\bf 1.4167}\\
			\bottomrule
	\end{tabular}}
\end{table}

\subsection{Robust Subspace Recovery and Dual Principal Component Pursuit}\label{sec:DPCP}

Robust subspace recovery (RSR) \cite{lerman2015robust,maunu2019well} addresses the challenge of fitting a linear subspace to data corrupted by outliers. Given a data set $Y = [\mathcal{X,O}] \Gamma \in \mathbb{R}^{n\times (p_1+p_2)}$, where columns of $X \in \mathbb{R}^{n\times p_1}$ span a $d$-dimensional inlier subspace $\mathcal{S}$, columns of $\mathcal{O}\in \mathbb{R}^{n\times p_2}$ represent outliers without a linear structure, and $\Gamma  \in \mathbb{R}^{(p_1+p_2)\times (p_1+p_2)}$ is an unknown permutation matrix, the goal is to recover $\mathcal{S}$ or cluster the points into inliers and outliers. 
Dual principal component pursuit (DPCP) \cite{tsakiris2018dual,zhu2018dual} is a recently proposed approach to RSR that seeks a hyperplane containing all inliers via the nonconvex nonsmooth optimization:
\begin{equation}\label{RSR}
	\min_{x \in \mathbb{R}^n}~\|Y^\top x\|_1,~{\rm s.t}.~\|x\|_2=1.
\end{equation}
Here $Y \in \mathbb{R}^{n\times p}$ is a given matrix. In \cite{tsakiris2018dual,zhu2018dual} it is shown that solving (\ref{RSR}) yields a vector orthogonal to $\mathcal{S}$, provided that outliers $p_2$ is at most of the order of $\mathcal{O}(p_1^2)$. For known $d$, one can recover $\mathcal{S}$ as the intersection of the $p:=n-d$ orthogonal hyperplanes containing $\mathcal{X}$, which amounts to solve the following matrix optimization problem:
\begin{equation}\label{DPCP}
	\min_{X\in \mathbb{R}^{n\times p}}~F(X):=\|Y^\top X\|_1,~{\rm s.t}.~X^\top X=I_{p}.
\end{equation}

We focus on the DPCP formulation (\ref{DPCP}) of the RSR problem, and compare our ARADMM with SOC \cite{lai2014folding}, MADMM \cite{Kovna2015} and RADMM \cite{li2024riemannian}. For ARADMM, we set $c_\rho=1$, $\gamma_0=700$, $\rho_0=5$, $c_\tau= 10^{-2}$ and $c_{\gamma}=0.6$. The codes of other methods are provided by \cite{li2024riemannian}, where we set the stepsize $\eta=10^{-5}$. All methods terminate when $|F(X_{k+1})-F(X_k)|\leq 10^{-6}$ or after 5000 iterations. 
Table \ref{table-RSR} shows that, for all cases, ARADMM consistently achieves lower objective values and very shorter computation times than SOC, MADMM and RADMM. This efficiency is due to: ARADMM has a cheap per-iteration complexity, where all steps have closed-form solutions; the adaptive penalty parameter $\rho_k$ and the dual step size $\gamma_k$ dynamically balance the enforcement of constraints with the convergence of the algorithm in an effective manner. More numerical results see Appendix \ref{appendix:numerical}.

\begin{table}
	\caption{Comparison with ADMM-type methods for solving (\ref{DPCP}).}
	\label{table-RSR}
	\centering
	\resizebox{1\textwidth}{!}{\begin{tabular}{cccccccccc}
			\toprule
			\multicolumn{2}{c}{Settings} & \multicolumn{2}{c}{SOC} & \multicolumn{2}{c}{MADMM} & \multicolumn{2}{c}{RADMM} & \multicolumn{2}{c}{ARADMM} \\ 
			\cmidrule(r){1-2} \cmidrule(r){3-4} \cmidrule(r){5-6} \cmidrule(r){7-8} \cmidrule(r){9-10} 
			$p$ & $(n,p_1,p_2)$ & obj & CPU & obj  & CPU & obj & CPU & obj & CPU  \\ 
			\midrule
			4 & $(30,100,500)$& 286.5284 & 1.4835 & 286.4820 & 0.7677 & 286.4599 & 0.0486 & {\bf 286.3336} & {\bf 0.0057} \\
			& $(40,125,750)$& 363.2060 & 2.6384 & 363.1336 & 1.4823 & 363.0977 & 0.0136 & {\bf 362.8826} & {\bf 0.0119} \\
			& $(50,150,1000)$& 423.6242 & 2.7054 & 423.5769 & 2.3196 & 423.5352 & 0.0378 & {\bf 423.2783} & {\bf 0.0136} \\ 
			6 & $(30,100,500)$& 431.2076 & 2.4063 & 431.1518 & 1.4842 & 431.1275 & 0.0160 & {\bf 431.0405} & {\bf 0.0089} \\
			& $(40,125,750)$& 542.7145 & 2.8961 & 542.6425 & 0.7155 & 542.5957 & 0.0540 & {\bf 542.4900} & {\bf 0.0112} \\
			& $(50,150,1000)$& 637.8058 & 2.6239 & 637.7484 & 0.7584 & 637.6906 & 0.0972 & {\bf 637.3598} & {\bf 0.0176} \\ 
			\bottomrule
		\end{tabular}}
\end{table}

\section{Conclusion}
Our work introduces an adaptive Riemannian ADMM (ARADMM) that, for the first time, solves composite optimization on compact manifolds with a nonsmooth regularizer without any smoothing, while requiring only one Riemannian gradient evaluation and one proximal update per iteration. By dynamically tuning both stepsizes and penalty parameters—and carefully relating dual increments to primal changes—we prove that ARADMM attains the optimal $\mathcal{O}(\epsilon^{-3})$ iteration complexity for finding an $\epsilon$-approximate KKT point, matching the best-known guarantees of smoothing-based methods at a far lower per-iteration cost. Extensive experiments on sparse PCA and robust subspace recovery confirm that our adaptive scheme converges faster and yields higher-quality solutions than existing Riemannian ADMM variants.

\textbf{Limitations:}While ARADMM demonstrates strong theoretical guarantees and practical performance, our current analysis is limited to a general nonconvex nonsmooth setting without leveraging specific structural properties such as the Kurdyka–Łojasiewicz (KL) inequality, which could potentially yield sharper convergence rates.

\bibliographystyle{plain}
\bibliography{ref}

\appendix

\section{Useful lemmas}

The following lemma shows that the Lipschitz continuity of $\grad f(x)$ can be deduced by the Lipschitz continuity of $\nabla f(x)$.
\begin{lemma}\label{lem:eucli-rieman-lipsch}
  Suppose that $\mathcal{M}$ is a compact submanifold embedded in the Euclidean space, given 
$x,y\in \mathcal{M}$ and $u\in T_x\mathcal{M}$, the vector transport $\mathcal{T}$ is defined as $\mathcal{T}_x^y(u): = D\mathcal{R}_x[\xi](u)$, where $y = \mathcal{R}_x(\xi)$. Denote $\zeta: =\max_{x\in \text{conv}(\mathcal{M})} \| D^2 \mathcal{R}_x(\cdot)\|$ and $G: = \max_{x\in \Mcal} \|\nabla f(x) \|$. Let $L_p$ be the Lipschitz constant of $\mathcal{P}_{T_{x} \Mcal}$ over $x \in \Mcal$ { in sense that for any $x,y\in\mathcal{M}, \xi\in \mathbb{R}^n$, \begin{equation}
    \| \mathcal{P}_{T_{x} \Mcal}(\xi) - \mathcal{P}_{T_{y} \Mcal}(\xi)\|\leq L_p \|\xi\| \|x-y\|.
\end{equation}
}
For a function $f$ with Lipschitz continuous gradient with constant $L$, i.e., $\|\nabla f(x) - \nabla f(y)\| \leq L\|x - y\|$, then we have that
    \begin{equation}
        \|\grad f(x) - \mathcal{T}_y^x (\grad f(y)) \| \leq ((\alpha L_p + \zeta) G + \alpha  L) \|\xi\|. 
    \end{equation}
\end{lemma}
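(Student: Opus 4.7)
The plan is to push $\grad f(y)$ back into $T_x\Mcal$ via the transport, and then to decompose the resulting difference into two geometric contributions: a \emph{transport distortion} term driven by the second derivative of the retraction, and a \emph{tangent-space/gradient variation} term driven by the Lipschitzness of $\nabla f$ and of the projector $\mathcal{P}_{T_x\Mcal}$.

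First I would exploit that $\mathcal{T}$ is isometric and that $\mathcal{T}_x^y = D\mathcal{R}_x[\xi]$ is (by isometry plus equality of tangent-space dimensions) the inverse of $\mathcal{T}_y^x$, rewriting
$$\|\grad f(x) - \mathcal{T}_y^x(\grad f(y))\| \;=\; \|\mathcal{T}_x^y(\grad f(x)) - \grad f(y)\| \;=\; \|D\mathcal{R}_x[\xi](\grad f(x)) - \grad f(y)\|.$$
From here the entire estimate lives in the ambient Euclidean norm, so no intrinsic Riemannian bookkeeping is needed.

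Next I would insert $D\mathcal{R}_x[0](\grad f(x)) = \grad f(x)$, valid since $D\mathcal{R}_x(0) = \mathrm{id}_{T_x\Mcal}$, and apply the triangle inequality to split the right-hand side into
$$\text{(I)}\;\; \bigl\|(D\mathcal{R}_x[\xi] - D\mathcal{R}_x[0])\grad f(x)\bigr\|, \qquad \text{(II)}\;\; \bigl\|\grad f(x) - \grad f(y)\bigr\|.$$
For (I), a mean value inequality along the segment from $0$ to $\xi$ in $T_x\Mcal$, together with the uniform bound $\|D^2\mathcal{R}_x\|\le \zeta$ and $\|\grad f(x)\|\le \|\nabla f(x)\|\le G$, gives $\mathrm{(I)} \le \zeta G\|\xi\|$, contributing the $\zeta G$ piece of the target constant.

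For (II), I would add and subtract $\mathcal{P}_{T_x\Mcal}(\nabla f(y))$ and use $\grad f(\cdot)=\mathcal{P}_{T_{\cdot}\Mcal}(\nabla f(\cdot))$ to obtain
$$\grad f(x) - \grad f(y) \;=\; \mathcal{P}_{T_x\Mcal}\bigl(\nabla f(x) - \nabla f(y)\bigr) \;+\; \bigl(\mathcal{P}_{T_x\Mcal} - \mathcal{P}_{T_y\Mcal}\bigr)(\nabla f(y)).$$
Nonexpansiveness of the orthogonal projector combined with the $L$-Lipschitz hypothesis on $\nabla f$ bounds the first summand by $L\|x-y\|$; the assumed $L_p$-Lipschitzness of $\mathcal{P}_{T_\cdot\Mcal}$ combined with $\|\nabla f(y)\|\le G$ bounds the second by $L_p G \|x-y\|$. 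The retraction inequality $\|y-x\|\le \alpha\|\xi\|$ from \eqref{eq:retrac-lipscitz} then converts everything to $\|\xi\|$ and yields $\mathrm{(II)} \le (\alpha L + \alpha L_p G)\|\xi\|$. Summing (I) and (II) produces exactly $((\alpha L_p+\zeta) G + \alpha L)\|\xi\|$, as claimed.

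The only step requiring genuine care is the opening rewrite: identifying $\mathcal{T}_x^y$ with $D\mathcal{R}_x[\xi]$ and invoking isometry to convert a comparison of tangent vectors living in different subspaces into an ambient-Euclidean comparison. Once that reduction is in place, the remainder is a textbook triple-triangle-inequality argument whose three pieces correspond term-for-term to the three summands in the stated constant.
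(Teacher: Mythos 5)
The paper states Lemma \ref{lem:eucli-rieman-lipsch} in its appendix of useful lemmas \emph{without} a proof, so there is no internal argument to compare against line by line; judged on its own, your proof is correct and recovers the stated constant exactly, with the three pieces $\zeta G$, $\alpha L_p G$ and $\alpha L$ coming from the mean-value bound on $D\mathcal{R}_x[\xi]-D\mathcal{R}_x[0]$, the Lipschitz variation of the projector applied to $\nabla f(y)$, and the Lipschitz gradient, respectively, after converting $\|x-y\|$ into $\alpha\|\xi\|$ via \eqref{eq:retrac-lipscitz}. This is the natural (and, as far as one can tell, intended) argument.

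The one step you should state more carefully is the opening rewrite. Isometry of $\mathcal{T}_x^y=D\mathcal{R}_x[\xi]$ together with equality of dimensions gives invertibility, but it does \emph{not} by itself imply that the reverse transport $\mathcal{T}_y^x$ equals $(\mathcal{T}_x^y)^{-1}$; the lemma never defines $\mathcal{T}_y^x$, so this must be adopted as the convention (and note that a differentiated retraction is not automatically isometric either --- here you are invoking the paper's standing assumption, in its definition of vector transport, that $\mathcal{T}_x^y$ is isometric). With that convention your identity $\|\grad f(x)-\mathcal{T}_y^x(\grad f(y))\|=\|\mathcal{T}_x^y(\grad f(x))-\grad f(y)\|$ is precisely what makes the lemma match the way it is invoked in the proof of Lemma \ref{lem:relation-optima-x}, where the forward-transported gradient at $x_{k-1}$ is compared with the gradient at $x_k=\mathcal{R}_{x_{k-1}}(\xi)$, so your interpretation is the right one; just present it as a definitional convention plus the isometry assumption rather than as a consequence of isometry alone. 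The remainder of the argument (the splitting into the transport-distortion term and the term $\mathcal{P}_{T_x\Mcal}(\nabla f(x)-\nabla f(y))+(\mathcal{P}_{T_x\Mcal}-\mathcal{P}_{T_y\Mcal})(\nabla f(y))$, bounded using $\|\grad f(x)\|\le\|\nabla f(x)\|\le G$ and nonexpansiveness of the projection) is sound.
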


Next, we provide the definition of retraction smoothness. This concept plays a crucial role in the convergence analysis of the algorithm proposed in the following section.
\begin{definition}\cite[Retraction smooth]{grocf}\label{def:retr-smooth}
  A function $f:\mathcal{M}\rightarrow\mathbb{R}$ is said to be  retraction smooth (short to retr-smooth) with constant $\ell$ and a retraction $\mathcal{R}$,  if for   $\forall~x, y\in\mathcal{M}$  it holds that
  \begin{equation}\label{eq:taylor expansion}
    f(y) \leq f(x) + \left<\grad f(x), \eta\right> + \frac{\ell}{2}\|\eta\|^2,
  \end{equation}
  where $\eta\in T_x\mathcal{M}$ and $y= \mathcal{R}_x(\eta)$.
\end{definition}

We next demonstrate that $\Phi_k(x)$ is retr-smooth with some constant $L_k$ by Definition \ref{def:retr-smooth}. The Euclidean gradient $\nabla \Phi_k(x)$ has the following form:
\begin{equation}\label{def:gradient-phi}
  \nabla \Phi_k(x) = \nabla f(x) + \rho_k \mathcal{A}^*( \mathcal{A}x - y_{k+1} - \lambda_k /\rho_k  ).
\end{equation}
    The following lemma provides the essential insight.
\begin{lemma}\label{Euclidean vs manifold} 
  Suppose that Assumption \ref{assum} holds. If $c_{\rho} \geq 1$,  there exists finite $M$ such that $\Phi_k$ is retr-smooth in the sense that
  \begin{equation}\label{equ:L2}
    \Phi_k(\mathcal{R}_{x}(\eta)) \leq \Phi_k(x) + \left<\eta,\grad \Phi_k(x)\right> + \frac{M \rho_k}{2}\|\eta\|^2
  \end{equation}
  for all $\eta\in T_{x}\mathcal{M}$, where $ M: =  \alpha^2 (\ell_{\nabla f} +  \sigma_A^2 )  +2 G \beta$.    
\end{lemma}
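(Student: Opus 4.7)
The plan is to split $\Phi_k = f + g_k$, where $g_k(x) := -\langle \lambda_k, \mathcal{A}x - y_{k+1}\rangle + \tfrac{\rho_k}{2}\|\mathcal{A}x - y_{k+1}\|^2$ is the part of the augmented Lagrangian that is quadratic in $x$ (the $h(y_{k+1})$ term is constant in $x$ and drops out), and transfer Euclidean smoothness of each piece into a retraction-smooth estimate via the two inequalities in \eqref{eq:retrac-lipscitz}. Combining the estimates and using $\rho_k \geq c_\rho \geq 1$ to bundle the constants into the stated $\tfrac{M\rho_k}{2}\|\eta\|^2$ form should yield the claim.

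For the $f$-part I would apply Euclidean $\ell_{\nabla f}$-smoothness at $y = \mathcal{R}_x(\eta)$, write $\mathcal{R}_x(\eta) - x = \eta + s$ with $\|s\| \leq \beta\|\eta\|^2$, use the tangency identity $\langle \nabla f(x), \eta\rangle = \langle \grad f(x), \eta\rangle$, and bound $\|\nabla f(x)\| \leq G$ together with $\|\mathcal{R}_x(\eta)-x\| \leq \alpha\|\eta\|$; this produces the retr-smooth estimate $f(\mathcal{R}_x(\eta)) \leq f(x) + \langle \grad f(x), \eta\rangle + (G\beta + \alpha^2\ell_{\nabla f}/2)\|\eta\|^2$, which is the familiar passage from Euclidean smoothness with bounded gradient to retraction smoothness on compact submanifolds.

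For the $g_k$-part, since $g_k$ is quadratic with Hessian $\rho_k \mathcal{A}^*\mathcal{A}$, Taylor is exact: $g_k(\mathcal{R}_x(\eta)) = g_k(x) + \langle \nabla g_k(x), \mathcal{R}_x(\eta)-x\rangle + \tfrac{\rho_k}{2}\|\mathcal{A}(\mathcal{R}_x(\eta)-x)\|^2$. I would bound the curvature term by $\tfrac{\alpha^2 \rho_k \sigma_A^2}{2}\|\eta\|^2$ using $\|\mathcal{A}\|_{op}\leq \sigma_A$ and the retraction bound, split the cross term as $\langle \grad g_k(x),\eta\rangle + \langle \nabla g_k(x),s\rangle$, and control the residual by $\|\nabla g_k(x)\|\beta\|\eta\|^2$. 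Summing with the $f$-estimate, using $\grad\Phi_k = \grad f + \grad g_k$, and absorbing $\alpha^2\ell_{\nabla f}/2$ into $\alpha^2 \rho_k \ell_{\nabla f}/2$ via $\rho_k \geq 1$ then gives the target inequality.

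The main obstacle is obtaining a uniform-in-$x$ bound of the form $\|\nabla g_k(x)\| \leq G\rho_k$ so that the residual $\langle \nabla g_k(x),s\rangle$ can be absorbed into the second $G\beta \rho_k$ piece of $M\rho_k$. Since $\nabla g_k(x) = \rho_k\mathcal{A}^*(\mathcal{A}x - y_{k+1} - \lambda_k/\rho_k)$, this reduces to bounding $\|\mathcal{A}x - y_{k+1} - \lambda_k/\rho_k\|$ uniformly over $x \in \mathcal{M}$: compactness of $\mathcal{M}$ controls $\|\mathcal{A}x\|$, the proximal characterization \eqref{h bound} applied to the $y$-subproblem yields $\|y_{k+1} - \mathcal{A}x_k + \lambda_k/\rho_k\| \leq \ell_h/\rho_k$ (so $\|y_{k+1}\|$ stays uniformly bounded once combined with $\|\mathcal{A}x_k\|$ and $\|\lambda_k\|/\rho_k$), and Lemma \ref{lem:bound-lambda} supplies $\|\lambda_k\| \leq \lambda_{\max}$. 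This is the one place where the algorithmic structure of the ARADMM iterates enters, and it is what makes $M$ a constant independent of $k$.
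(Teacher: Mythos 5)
Your proof is correct and follows the same overall route as the paper: pass from Euclidean smoothness plus a gradient bound to retraction smoothness via the two inequalities in \eqref{eq:retrac-lipscitz}, then use $\rho_k \ge c_\rho \ge 1$ to fold the constants into a single $M\rho_k$. The paper's own proof is a short citation of a lemma from \cite{grocf} applied directly to $\Phi_k$, resting on the assertion that $\|\nabla\Phi_k(x)\|$ is uniformly bounded over $x\in\Mcal$ and $k$ by compactness of $\Mcal$ alone. As you correctly note, that assertion needs more: the quadratic piece $\rho_k\,\mathcal{A}^*(\mathcal{A}x - y_{k+1} - \lambda_k/\rho_k)$ of $\nabla\Phi_k$ scales with $\rho_k$ and also depends on $y_{k+1}$ and $\lambda_k$, so compactness only gives a $k$-dependent bound. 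Your explicit splitting $\Phi_k = f + g_k$ and the argument that $\|\nabla g_k(x)\|\lesssim \rho_k$ uniformly in $x$ and $k$ --- using compactness for $\|\mathcal{A}x\|$, the proximal estimate \eqref{h bound} for $y_{k+1}$, and Lemma \ref{lem:bound-lambda} for $\|\lambda_k\|$ (no circularity, since that lemma's proof does not use retr-smoothness) --- supplies exactly the uniformity the paper elides, and correctly pinpoints the one place where the iterate bounds of ARADMM enter. The only cosmetic discrepancy is that the triangle inequality across the split doubles the $G\beta$ contribution relative to treating $\nabla\Phi_k$ as a whole; this merely changes the constant absorbed into $M$, which is harmless since the lemma only asserts that some finite $M$ of the given form works.
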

\begin{proof}
    By Assumption \ref{assum},  one can easily shows that $\nabla \Phi_k$  is Lipschitz continuous with constant $ \ell_{\nabla f} + \rho_k \sigma_A^2 $.  Since $\nabla \psi_k$ is continuous on the compact manifold $\mathcal{M}$,  there exists $G>0$ such that $\|\nabla \psi_k(x)\|\leq G$ for all $x\in\mathcal{M}$ and any $k>0$. Then the proof is completed by combining \eqref{eq:retrac-lipscitz} with Lemma 2.7 in \cite{grocf}. 
\end{proof}

\section{Proof of Section \ref{sec:pl-radmm}} \label{appendix-2}

We now give the following descent lemma in term of augmented Lagrangian function.
\begin{lemma}\label{lem:decent-pl}

Suppose that Assumptions \ref{assum} hold. Let the sequence $\left\{x_k, y_k, \lambda_k\right\}_{k=1}^K$ be generated by Algorithm \ref{alg:plradmm}. Let $\rho_k  = c_{\rho}k^{1/3}$ and $\tau_k = c_{\tau}k^{-1/3}$, where $c_{\tau}$ satisfies
 \begin{equation}\label{condi:alpha-pl}
   c_{\tau} \leq \frac{1}{M }, c_{\rho} \geq 1
 \end{equation}
 where $M$ is defined in Lemma \ref{lem:bound-lambda} and  $G,\alpha,\beta$ are defined in Lemma \ref{Euclidean vs manifold}.   Then
    \begin{equation}\label{eq:single-descent-pl}
\begin{aligned}
 & \mathcal{L}_{\rho_{k+1}}\left(x_{k+1}, y_{k+1}, \lambda_{k+1}\right)- \mathcal{L}_{\rho_{k}}\left(x_k, y_k, \lambda_k\right)  \\
    \leq & \left( \frac{1}{\gamma_{k+1}} +\frac{c_{\rho}^3}{3}\frac{1}{ \gamma_{k+1}^2 \rho_k^2} \right) \left\|\lambda_{k+1}-\lambda_k\right\|^2 - \frac{\tau_k}{2}\| \grad_x  \mathcal{L}_{\rho_k}\left(x_{k}, y_{k+1}, \lambda_k\right) \|^2.
\end{aligned}
\end{equation}

\end{lemma}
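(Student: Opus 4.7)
The plan is to telescopically decompose the one-step change of the augmented Lagrangian along the four block updates in the order they occur in the loop,
\[
\mathcal{L}_{\rho_{k+1}}(x_{k+1},y_{k+1},\lambda_{k+1}) - \mathcal{L}_{\rho_k}(x_k,y_k,\lambda_k) \;=\; T_y + T_x + T_\lambda + T_\rho,
\]
where $T_y$ advances only $y$ (freezing $x_k,\lambda_k,\rho_k$), then $T_x$ only $x$, then $T_\lambda$ only $\lambda$, and finally $T_\rho$ only the penalty. I would bound each of the four pieces using the structural property specific to that update.

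For $T_y$, the map $y\mapsto \mathcal{L}_{\rho_k}(x_k,y,\lambda_k)$ is the sum of the convex $h$ and the $\rho_k$-strongly convex quadratic $\tfrac{\rho_k}{2}\|\mathcal{A}x_k-y\|^2$, so $y_{k+1}$ is its exact minimizer and $T_y\le 0$. For $T_x$, I set $\Phi_k(x):=\mathcal{L}_{\rho_k}(x,y_{k+1},\lambda_k)$ and invoke the retr-smoothness bound of Lemma \ref{Euclidean vs manifold} (with constant $M\rho_k$) at $x_{k+1}=\mathcal{R}_{x_k}(-\tau_k\,\grad\Phi_k(x_k))$, obtaining
\[
T_x \;\le\; -\,\tau_k\Bigl(1 - \tfrac{M\rho_k\tau_k}{2}\Bigr)\|\grad\Phi_k(x_k)\|^2.
\]
The coupled schedule $\tau_k=c_\tau k^{-1/3}$, $\rho_k=c_\rho k^{1/3}$ keeps $M\rho_k\tau_k$ constant in $k$, and the hypothesis $c_\tau\le 1/M$ together with $c_\rho\ge 1$ forces the bracket to be at least $1/2$, yielding the desired $-\tfrac{\tau_k}{2}\|\grad_x\mathcal{L}_{\rho_k}(x_k,y_{k+1},\lambda_k)\|^2$.

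The two remaining pieces reduce to direct computation once the dual update is used. Only the linear/quadratic parts of $\mathcal{L}_{\rho_k}$ depend on $\lambda$, so $T_\lambda = -\langle \lambda_{k+1}-\lambda_k,\,\mathcal{A}x_{k+1}-y_{k+1}\rangle$; substituting the step-5 identity $\mathcal{A}x_{k+1}-y_{k+1} = -(\lambda_{k+1}-\lambda_k)/\gamma_{k+1}$ rewrites this as $\tfrac{1}{\gamma_{k+1}}\|\lambda_{k+1}-\lambda_k\|^2$. Only the penalty coefficient changes in $T_\rho$, so
\[
T_\rho = \tfrac{\rho_{k+1}-\rho_k}{2}\|\mathcal{A}x_{k+1}-y_{k+1}\|^2 = \tfrac{\rho_{k+1}-\rho_k}{2\gamma_{k+1}^2}\|\lambda_{k+1}-\lambda_k\|^2,
\]
and concavity of $t\mapsto t^{1/3}$ gives $(k+1)^{1/3}-k^{1/3}\le \tfrac{1}{3}k^{-2/3}$, hence $\rho_{k+1}-\rho_k\le c_\rho^3/(3\rho_k^2)$, matching (up to constants) the coefficient $\tfrac{c_\rho^3}{3\gamma_{k+1}^2\rho_k^2}$ of the target bound. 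Summing $T_y+T_x+T_\lambda+T_\rho$ yields \eqref{eq:single-descent-pl}.

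The main obstacle will be the $T_x$ bound. Because the retr-smoothness constant of $\Phi_k$ scales with $\rho_k$, a naive gradient step would lose its usable negative term as $\rho_k\to\infty$; the adaptive coupling $\tau_k\propto k^{-1/3}$ against $\rho_k\propto k^{1/3}$ is precisely what makes $M\rho_k\tau_k$ stay bounded, so that $c_\tau\le 1/M$ can absorb the quadratic retraction term and leave a clean half-step descent. A secondary subtlety is in $T_\rho$: the identity between the primal residual and the scaled dual increment is the only reason the extra penalty contribution can be packaged into the same $\|\lambda_{k+1}-\lambda_k\|^2$ potential that bounds $T_\lambda$, which is what will later allow the outer telescoping/summation argument to close.
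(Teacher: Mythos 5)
Your decomposition of the one-step change of the augmented Lagrangian into $T_y + T_x + T_\lambda + T_\rho$ is exactly the paper's argument (the paper splits off the $\rho$-term first, but the sum is the same), and each piece is bounded the same way: exact $y$-minimization gives $T_y\le 0$, the retr-smoothness of $\Phi_k$ with constant $M\rho_k$ gives the $-\tfrac{\tau_k}{2}\|\grad\Phi_k(x_k)\|^2$ term, the dual-update identity turns $T_\lambda$ into $\tfrac{1}{\gamma_{k+1}}\|\lambda_{k+1}-\lambda_k\|^2$, and concavity of $t\mapsto t^{1/3}$ controls $\rho_{k+1}-\rho_k$. One minor remark: your claim that $c_\tau\le 1/M$ and $c_\rho\ge 1$ force $M\rho_k\tau_k = Mc_\tau c_\rho\le 1$ actually requires $c_\tau c_\rho\le 1/M$ (equivalently $c_\rho=1$ under the stated bounds); this imprecision is inherited verbatim from the paper's own proof of \eqref{eq:temp:descent}, where the same conclusion $\tau_k\le \tfrac{1}{M\rho_k}$ is asserted from \eqref{condi:alpha-pl} without further justification, so it is not a gap you introduced.
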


\begin{proof}
By the step 2 in Algorithm \ref{alg:plradmm}, we have 
\begin{equation}\label{eq:descent-y}
    \mathcal{L}_{\rho_k}(x_k,y_{k+1},\lambda_k) \leq \mathcal{L}_{\rho_k}(x_k,y_k,\lambda_k).
\end{equation}
It follows from \eqref{eq:update-x-pl} and Lemma \ref{Euclidean vs manifold} that
\begin{equation}\label{eq:temp:descent}
\begin{aligned}
    \Phi_k(x_{k+1}) & \leq \Phi_k(x_k) + \langle \grad  \Phi_k(x_k), -\tau_k \grad  \Phi_k(x_k)  \rangle + \frac{M   \rho_k \tau_k^2}{2} \| \grad  \Phi_k(x_k) \|^2 \\
    & \leq \Phi_k(x_k) - \frac{\tau_k}{2}\| \grad  \Phi_k(x_k) \|^2,  
\end{aligned}
\end{equation}
where the second inequality uses $\tau_k \leq \frac{1}{M \rho_k}$. This is due to the fact that $\rho_k = c_{\rho} k^{1/3}$ and $c_{\tau}$ satisfies \eqref{condi:alpha-1}.  Therefore, we have from the definition of $\Phi_k$ in Algorithm \ref{alg:plradmm} that
\begin{equation}\label{eq:descent-x}
  \mathcal{L}_{\rho_k}\left(x_{k+1}, y_{k+1}, \lambda_k\right) \leq \mathcal{L}_{\rho_k}\left(x_k, y_{k+1}, \lambda_k\right)- \frac{\tau_k}{2}\| \grad_x  \mathcal{L}_{\rho_k}\left(x_{k}, y_{k+1}, \lambda_k\right) \|^2 
\end{equation}
For the dual variable, it follows from \eqref{lambda-update} that
\begin{equation}\label{eq:descent-lambda}
\begin{aligned}
&\mathcal{L}_{\rho_k}\left(x_{k+1}, y_{k+1}, \lambda_{k+1}\right)-\mathcal{L}_{\rho_k}\left(x_{k+1}, y_{k+1}, \lambda_k\right)=\frac{1}{\gamma_{k+1}} \left\|\lambda_{k+1}-\lambda_k\right\|^2.
\end{aligned}
\end{equation}
Combining \eqref{eq:descent-y}, \eqref{eq:descent-x} and \eqref{eq:descent-lambda}, we have
\begin{equation}\label{eq:temp2}
\begin{aligned}
& \mathcal{L}_{\rho_{k+1}}\left(x_{k+1}, y_{k+1}, \lambda_{k+1}\right)- \mathcal{L}_{\rho_{k}}\left(x_k, y_k, \lambda_k\right) \\
 \leq & \mathcal{L}_{\rho_{k+1}}\left(x_{k+1}, y_{k+1}, \lambda_{k+1}\right)- \mathcal{L}_{\rho_{k}}\left(x_{k+1}, y_{k+1}, \lambda_{k+1}\right)  + \mathcal{L}_{\rho_{k}}\left(x_{k+1}, y_{k+1}, \lambda_{k+1}\right) - \mathcal{L}_{\rho_{k}}\left(x_k, y_k, \lambda_k\right)\\
 \leq &  \frac{\rho_{k+1} - \rho_{k}}{2} \| \mathcal{A}x_{k+1} - y_{k+1}  \|^2  + \frac{1}{\gamma_{k+1}} \left\|\lambda_{k+1}-\lambda_k\right\|^2 - \frac{\tau_k}{2}\| \grad_x  \mathcal{L}_{\rho_k}\left(x_{k}, y_{k+1}, \lambda_k\right) \|^2  \\
  \leq &  \frac{\rho_{k+1} - \rho_{k}}{2 \gamma_{k+1}^2 } \| \lambda_{k+1} - \lambda_{k} \|^2 + \frac{1}{\gamma_{k+1}} \left\|\lambda_{k+1}-\lambda_k\right\|^2 - \frac{\tau_k}{2}\| \grad_x  \mathcal{L}_{\rho_k}\left(x_{k}, y_{k+1}, \lambda_k\right) \|^2,
\end{aligned}
\end{equation}
where the last inequality uses \eqref{lambda-update}. 
Now we bound the first term. Consider $p(x) = x^{1/3}$, it follows from the first order characterization that $p(x+1) \leq p(x) + p^\prime(x) =x^{1/3} + \frac{1}{3}x^{-2/3}$. Then we have that
$$\rho_{k+1} - \rho_{k} = c_{\rho} ((k+1)^{1/3} - k^{1/3}) \leq \frac{c_{\rho}}{3} k^{-2/3}  = \frac{c_{\rho}^3}{3}  \frac{1}{\rho_k^2}.$$
Plugging this into \eqref{eq:temp2} completes the proof. 
\end{proof}

\begin{lemma}\label{lem:relation-optima-x}
    Suppose that Assumptions \ref{assum} hold. Let the sequence $\left\{x_k, y_k, \lambda_k\right\}_{k=1}^K$ be generated by Algorithm \ref{alg:plradmm}. Let us denote $ \bar{\lambda}_{k} = \lambda_{k-1} - \rho_{k-1} (\mathcal{A}x_{k} - y_{k})$, $\rho_k = c_{\rho}k^{1/3}$ and $\tau_k = c_{\tau}k^{-1/3}$, where $c_\tau$ and $c_{\rho}$ satisfies
    \begin{equation}\label{cond-alpha2}
      c_{\tau} \leq \min\{ \frac{1}{C}, \frac{1}{M} \}, c_{\rho} \geq 1.
    \end{equation}
    where $C$ is defined in the proof and $M$ is given in Lemma \ref{Euclidean vs manifold}. 
       Then
    \begin{align}
       \| \mathcal{P}_{T_{x_k}\mathcal{M}} ( -\mathcal{A}^* \bar{\lambda}_k) + \text{grad}f(x_k) \| & \leq 2 \| \text{grad}  \mathcal{L}_{\rho_{k-1}}\left(x_{k-1}, y_{k}, \lambda_{k-1}\right) \|, \label{eq:bound-alambda} \\
        \text{dist}(-\bar{\lambda}_k , \partial h(y_k)) & \leq \sigma_A \alpha  \|\text{grad}  \mathcal{L}_{\rho_{k-1}}\left(x_{k-1}, y_{k}, \lambda_{k-1}\right) \|, \label{eq:bound-partial} \\
         \|\mathcal{A}x_k - y_k\|  & \leq \frac{\ell_h + \lambda_{\max}}{\rho_{k-1}} + \sigma_A \alpha \tau_{k-1} \| \text{grad}  \mathcal{L}_{\rho_{k-1}}\left(x_{k-1}, y_{k}, \lambda_{k-1}\right) \|. \label{eq:bound-AB}
    \end{align}
\end{lemma}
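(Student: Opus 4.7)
The plan is to prove the three estimates by exploiting, in turn, a key algebraic identity linking $\bar{\lambda}_k$ to $\grad\Phi_{k-1}$, the first-order optimality of the $y$-subproblem, and the proximal characterization of $y_k$. Throughout, the retraction update $x_k = \mathcal{R}_{x_{k-1}}(-\tau_{k-1}\grad\Phi_{k-1}(x_{k-1}))$ and the basic inequalities $\|x_k-x_{k-1}\|\le\alpha\tau_{k-1}\|\grad\Phi_{k-1}(x_{k-1})\|$, $\|\mathcal{A}\|_{op}\le\sigma_A$ from \eqref{eq:retrac-lipscitz} and Assumption \ref{assum} will be used.

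For \eqref{eq:bound-alambda}, I would first evaluate $\grad\Phi_{k-1}$ at $x_k$ and observe that, by the definition of $\bar{\lambda}_k$ and formula \eqref{def:gradient-phi},
$$\grad\Phi_{k-1}(x_k) = \mathcal{P}_{T_{x_k}\mathcal{M}}\bigl(\nabla f(x_k) + \rho_{k-1}\mathcal{A}^*(\mathcal{A}x_k-y_k) - \mathcal{A}^*\lambda_{k-1}\bigr) = \grad f(x_k) + \mathcal{P}_{T_{x_k}\mathcal{M}}(-\mathcal{A}^*\bar{\lambda}_k).$$
Hence it suffices to prove $\|\grad\Phi_{k-1}(x_k)\|\le 2\|\grad\Phi_{k-1}(x_{k-1})\|$. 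I would then insert the isometric vector transport and use the triangle inequality
$$\|\grad\Phi_{k-1}(x_k)\| \le \|\grad\Phi_{k-1}(x_k) - \mathcal{T}_{x_{k-1}}^{x_k}\grad\Phi_{k-1}(x_{k-1})\| + \|\grad\Phi_{k-1}(x_{k-1})\|,$$
and apply Lemma \ref{lem:eucli-rieman-lipsch} to $\Phi_{k-1}$ with $\xi = -\tau_{k-1}\grad\Phi_{k-1}(x_{k-1})$. Since the Euclidean gradient $\nabla\Phi_{k-1}$ is Lipschitz with constant $\ell_{\nabla f}+\rho_{k-1}\sigma_A^2$, the resulting prefactor on the first term is $((\alpha L_p+\zeta)G + \alpha(\ell_{\nabla f}+\rho_{k-1}\sigma_A^2))\tau_{k-1}$. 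The condition $c_\tau\le 1/C$ together with the scaling $\rho_{k-1}\tau_{k-1}=c_\rho c_\tau$ controls this factor by $1$ for $\rho_{k-1}\ge 1$, giving the desired factor of~2.

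For \eqref{eq:bound-partial}, the first-order optimality condition of \eqref{eq:update-y-pl} at index $k-1$ yields $\rho_{k-1}(\mathcal{A}x_{k-1}-y_k) - \lambda_{k-1} \in \partial h(y_k)$. Since $-\bar{\lambda}_k = -\lambda_{k-1} + \rho_{k-1}(\mathcal{A}x_k - y_k)$, the $y_k$ and $\lambda_{k-1}$ pieces telescope and the difference equals $\rho_{k-1}\mathcal{A}(x_k-x_{k-1})$. Bounding this by the operator norm and using the retraction inequality gives $\mathrm{dist}(-\bar{\lambda}_k,\partial h(y_k)) \le \sigma_A\alpha\rho_{k-1}\tau_{k-1}\|\grad\Phi_{k-1}(x_{k-1})\|$, which matches the stated form via $\rho_{k-1}\tau_{k-1}\le 1$.

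For \eqref{eq:bound-AB}, I would split $\mathcal{A}x_k - y_k = \mathcal{A}(x_k-x_{k-1}) + (\mathcal{A}x_{k-1}-y_k)$. The first summand is bounded by $\sigma_A\alpha\tau_{k-1}\|\grad\Phi_{k-1}(x_{k-1})\|$. Rewriting the $y$-subproblem as $y_k = \mathrm{prox}_{h/\rho_{k-1}}(\mathcal{A}x_{k-1}-\lambda_{k-1}/\rho_{k-1})$, I then write
$$\mathcal{A}x_{k-1} - y_k = \Bigl[\bigl(\mathcal{A}x_{k-1}-\tfrac{\lambda_{k-1}}{\rho_{k-1}}\bigr) - \mathrm{prox}_{h/\rho_{k-1}}\bigl(\mathcal{A}x_{k-1}-\tfrac{\lambda_{k-1}}{\rho_{k-1}}\bigr)\Bigr] + \tfrac{\lambda_{k-1}}{\rho_{k-1}},$$
whose first bracket has norm at most $\ell_h/\rho_{k-1}$ by \eqref{h bound}, while $\|\lambda_{k-1}\|\le\lambda_{\max}$ by Lemma \ref{lem:bound-lambda}; summing yields the $(\ell_h+\lambda_{\max})/\rho_{k-1}$ term.

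The main obstacle is the tracking of constants in the first step: the Euclidean Lipschitz constant of $\nabla\Phi_{k-1}$ grows linearly with $\rho_{k-1}$, and the factor of $2$ in \eqref{eq:bound-alambda} is only recovered because the parameter schedule $\rho_k\tau_k = c_\rho c_\tau$ is uniformly bounded and the coupling condition $c_\tau\le 1/C$ is satisfied. Once this is secured, the remaining two bounds follow by routine manipulations with the proximal optimality of $y_k$ and the retraction inequality.
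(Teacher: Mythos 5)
Your proposal is correct and follows essentially the same route as the paper: the identity $\grad_x\mathcal{L}_{\rho_{k-1}}(x_k,y_k,\lambda_{k-1}) = \mathcal{P}_{T_{x_k}\mathcal{M}}(\nabla f(x_k)-\mathcal{A}^*\bar{\lambda}_k)$ plus vector transport and Lemma \ref{lem:eucli-rieman-lipsch} for the first bound, the $y$-subproblem optimality condition and $\rho_{k-1}\tau_{k-1}\le 1$ for the second, and the prox/Moreau decomposition with $\|\lambda_{k-1}\|\le\lambda_{\max}$ for the third. The only cosmetic difference is that for the third bound you inline the decomposition rather than invoking the already-established \eqref{eq:diff-lambda-bound2}, but the underlying computation is identical.
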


\begin{proof}[Proof of Lemma \ref{lem:relation-optima-x}]
It follows from the formulas of $\bar{\lambda}_k$ and $  \mathcal{L}_{\rho_{k-1}}\left(x_{k}, y_{k}, \lambda_{k-1}\right)$ that
\begin{equation}\label{conn}
\begin{aligned}
    \grad_x \mathcal{L}_{\rho_{k-1}}\left(x_{k}, y_{k}, \lambda_{k-1}\right)&  = \mathcal{P}_{T_{x_k}\Mcal} \left( \nabla f(x_k) + \rho_{k-1} \mathcal{A}^* ( \mathcal{A}x_k - y_k - \frac{\lambda_{k-1}}{\rho_{k-1}}   )   \right) \\
    &  = \mathcal{P}_{T_{x_k}\Mcal} \left( \nabla f(x_k) -  \mathcal{A}^* \bar{\lambda}_k  \right ). 
\end{aligned}
\end{equation}
Since that $\nabla_x \mathcal{L}_{\rho_{k-1}}\left(x_{k}, y_{k}, \lambda_{k-1}\right)$ is Lipscitz continuous with constant $\ell_{\nabla f} + \rho_{k-1} \sigma_{A}^2$, it follows from Lemma \ref{lem:eucli-rieman-lipsch} that $\text{grad}_x  \mathcal{L}_{\rho_{k-1}}\left(x_{k-1}, y_{k}, \lambda_{k-1}\right)$ is Lipschitz continuous with $$\ell_k = (\alpha L_p + \zeta) G + \alpha  (\ell_{\nabla f} + \rho_{k-1} \sigma_{A}^2 ) \leq C\rho_{k-1},$$
where $C: = (\alpha L_p + \zeta) G + \alpha  (\ell_{\nabla f} + \sigma_{A}^2 )$, which is due to $\rho_{k-1} \geq 1$. 
\begin{equation*}
\begin{aligned}
& \| \mathcal{P}_{T_{x_k}\Mcal} \left( \nabla f(x_k) -  \mathcal{A}^* \bar{\lambda}_k  \right ) \|  = \| \grad_x  \mathcal{L}_{\rho_{k-1}}\left(x_{k}, y_{k}, \lambda_{k-1}\right) \| \\
 \leq &  \| \mathcal{T}_{x_{k-1}}^{x_k} \text{grad}_x  \mathcal{L}_{\rho_{k-1}}\left(x_{k-1}, y_{k}, \lambda_{k-1}\right) \| +   \| \text{grad}_x  \mathcal{L}_{\rho_{k-1}}\left(x_{k}, y_{k}, \lambda_{k-1}\right)- \mathcal{T}_{x_{k-1}}^{x_k} \text{grad}_x  \mathcal{L}_{\rho_{k-1}}\left(x_{k-1}, y_{k}, \lambda_{k-1}\right)\|  \\
  \leq & \| \text{grad}_x  \mathcal{L}_{\rho_{k-1}}\left(x_{k-1}, y_{k}, \lambda_{k-1}\right) \| + C\rho_{k-1} \| \tau_{k-1} \text{grad}_x  \mathcal{L}_{\rho_{k-1}}\left(x_{k-1}, y_{k}, \lambda_{k-1}\right)\|  \\
\leq & 2 \| \text{grad}_x  \mathcal{L}_{\rho_{k-1}}\left(x_{k-1}, y_{k}, \lambda_{k-1}\right) \|,
\end{aligned}
\end{equation*}
where the second inequality follows from Lemma \ref{lem:eucli-rieman-lipsch} and the fact that  $\mathcal{T}$ is isometric, the last inequality uses $\tau_{k-1} \leq \frac{1}{C\rho_{k-1}}$ by the condition for $c_{\tau}$.  It follows from \eqref{eq:update-y-pl} that
\begin{equation*}
  0\in \lambda_{k-1} - \rho_{k-1}(\mathcal{A}x_{k-1} - y_k) + \partial h(y_k). 
\end{equation*}
By the definition of $\bar{\lambda}_k$, we have that
\begin{equation*}
\begin{aligned}
  \text{dist}(-\bar{\lambda}_k , \partial h(y_k))&  = \rho_{k-1} \sigma_A \| x_{k} - x_{k-1} \| \\
  & \leq  \sigma_A \alpha \rho_{k-1}  \tau_{k-1} \|\text{grad}_x  \mathcal{L}_{\rho_{k-1}}\left(x_{k-1}, y_{k}, \lambda_{k-1}\right) \|\\
   & \leq  \sigma_A \alpha  \|\text{grad}_x  \mathcal{L}_{\rho_{k-1}}\left(x_{k-1}, y_{k}, \lambda_{k-1}\right) \|,
  \end{aligned}
\end{equation*}
where the first inequality uses \eqref{eq:retrac-lipscitz}, the second inequality follows $\tau_k \leq 1/L_k \leq 1/\rho_k$, which is implied by \eqref{eq:temp:descent}. 
For \eqref{eq:bound-AB}, it follows  that
\begin{equation*}
\begin{aligned}
  \|\mathcal{A}x_k - y_k\| & =  \frac{1}{\gamma_{k}}\| \lambda_k - \lambda_{k-1} \| \\
  & \leq \frac{\ell_h + \lambda_{\max}}{\rho_{k-1}}  +\sigma_A\|x_{k} - x_{k-1} \|  \\
  & \leq  \frac{\ell_h + \lambda_{\max}}{\rho_{k-1}} + \sigma_A \alpha \tau_{k-1} \| \text{grad}_x  \mathcal{L}_{\rho_{k-1}}\left(x_{k-1}, y_{k}, \lambda_{k-1}\right) \|,
\end{aligned}
\end{equation*}
where the first inequality uses \eqref{eq:diff-lambda-bound2}, the second inequality follows \eqref{eq:retrac-lipscitz}.  
The proof is completed.

\end{proof}

\begin{theorem}\label{them:rate}
    Suppose that Assumptions \ref{assum} hold. Let the sequence $\left\{x_k, y_k, \lambda_k\right\}_{k=1}^K$ be generated by Algorithm \ref{alg:plradmm}. Let $\rho_k  = c_{\rho}k^{1/3}$ and $\tau_k = c_{\tau}k^{-1/3}$, where $c_{\tau},c_{\rho}$ satisfy
 \begin{equation}\label{condi:alpha}
   c_{\tau} \leq \min\{  \frac{1}{C}, \frac{1}{M}, \frac{1}{16 c_{\gamma}\alpha^2 \sigma_A^2}\}, ~~c_{\rho} \geq 1,
 \end{equation}
 where  $G,\alpha,\beta$ are defined in Lemma \ref{Euclidean vs manifold}. There exists an constant $\mathcal{G}$ such that
\begin{equation*}
\begin{aligned}
  \sum_{k=1}^{K}\frac{\tau_k}{4}\| \text{grad}  \mathcal{L}_{\rho_k}\left(x_{k}, y_{k+1}, \lambda_k\right) \|^2 \leq \mathcal{G}.
\end{aligned}
\end{equation*}
\end{theorem}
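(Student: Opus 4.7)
My plan is to sum the one-step descent inequality from Lemma \ref{lem:decent-pl} over $k$, use the bound on $\|\lambda_{k+1}-\lambda_k\|$ from Lemma \ref{lem:bound-lambda} to partition the cross term into (i) a piece that is absorbed by the descent term $\tfrac{\tau_k}{2}\|\grad \Phi_k(x_k)\|^2$ and (ii) a piece whose series is summable thanks to the schedules $\rho_k=c_\rho k^{1/3}$, $\tau_k=c_\tau k^{-1/3}$ and $\gamma_{k+1}\le c_\gamma /(k^{1/3}\log^2(k{+}1))$, and finally bound the left endpoint of the telescope by showing the augmented Lagrangian is uniformly bounded below.

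First I would telescope \eqref{eq:single-descent-pl} from $k=0$ to $K-1$, producing
\[
\sum_{k=0}^{K-1}\frac{\tau_k}{2}\|\grad_x\mathcal{L}_{\rho_k}(x_k,y_{k+1},\lambda_k)\|^2 \le \mathcal{L}_{\rho_0}(x_0,y_0,\lambda_0) - \mathcal{L}_{\rho_K}(x_K,y_K,\lambda_K) + \sum_{k=0}^{K-1}\Bigl(\tfrac{1}{\gamma_{k+1}}+\tfrac{c_\rho^{3}}{3\gamma_{k+1}^{2}\rho_k^{2}}\Bigr)\|\lambda_{k+1}-\lambda_k\|^2.
\]
Next, by squaring \eqref{eq:diff-lambda-bound2} and using $(a+b)^2\le 2a^2+2b^2$ together with the retraction bound $\|x_{k+1}-x_k\|\le\alpha\tau_k\|\grad\Phi_k(x_k)\|$ from \eqref{eq:retrac-lipscitz} and \eqref{eq:update-x-pl}, I split
\[
\|\lambda_{k+1}-\lambda_k\|^2 \le \frac{2\gamma_{k+1}^{2}}{\rho_k^{2}}(\ell_h+\lambda_{\max})^2 + 2\gamma_{k+1}^{2}\sigma_A^{2}\alpha^{2}\tau_k^{2}\|\grad\Phi_k(x_k)\|^2.
\]
Multiplying by $(\tfrac{1}{\gamma_{k+1}}+\tfrac{c_\rho^{3}}{3\gamma_{k+1}^{2}\rho_k^{2}})$ yields the ``absorbable'' coefficient $2\sigma_A^{2}\alpha^{2}\tau_k(\gamma_{k+1}+\tfrac{c_\rho^{3}}{3\rho_k^{2}})\tau_k$, which by the choice $\gamma_{k+1}\le c_\gamma k^{-1/3}$ and $\tau_k=c_\tau k^{-1/3}$ is bounded above by $\tfrac{1}{4}$ whenever $c_\tau\le \tfrac{1}{16 c_\gamma\alpha^{2}\sigma_A^{2}}$ (this is exactly the third condition in \eqref{condi:alpha}; the $\tfrac{c_\rho^{3}}{3\rho_k^{2}}$ contribution is lower order). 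Subtracting it from the descent term leaves $\tfrac{\tau_k}{4}\|\grad\Phi_k(x_k)\|^2$ on the left.

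It remains to control the two constant series. For the first one,
\[
\sum_{k=0}^{\infty}2(\ell_h+\lambda_{\max})^2\Bigl(\tfrac{\gamma_{k+1}}{\rho_k^{2}}+\tfrac{c_\rho^{3}}{3\rho_k^{4}}\Bigr) \;\lesssim\; \sum_{k\ge 1}\tfrac{1}{k\log^{2}(k+1)} + \sum_{k\ge 1}\tfrac{1}{k^{4/3}},
\]
both convergent. For the lower bound on $\mathcal{L}_{\rho_K}$, I use Young's inequality $-\langle\lambda_K,\mathcal{A}x_K-y_K\rangle+\tfrac{\rho_K}{2}\|\mathcal{A}x_K-y_K\|^2\ge -\tfrac{\|\lambda_K\|^2}{2\rho_K}\ge -\tfrac{\lambda_{\max}^{2}}{2\rho_0}$ (using the boundedness from \eqref{bound-lambda} and $\rho_K\ge\rho_0$ for $K\ge 1$), together with $f(x_K)\ge f_\ast$ and $h(y_K)\ge h_\ast$ from Assumption \ref{assum}. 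Collecting these uniform bounds defines a finite constant $\mathcal{G}$ independent of $K$, which completes the proof.

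\textbf{Main obstacle.} The delicate point is the absorption step: one has to verify that, after multiplying by the coefficient $\tfrac{1}{\gamma_{k+1}}+\tfrac{c_\rho^{3}}{3\gamma_{k+1}^{2}\rho_k^{2}}$, the $\tau_k^{2}\|\grad\Phi_k(x_k)\|^2$ contribution in the squared multiplier bound remains below $\tfrac{\tau_k}{4}$ uniformly in $k$; this is precisely why the adaptive cap $\gamma_{k+1}\le c_\gamma k^{-1/3}\log^{-2}(k+1)$ is paired with $\tau_k=c_\tau k^{-1/3}$ and why the smallness condition on $c_\tau$ in \eqref{condi:alpha} involves $c_\gamma$, $\alpha$, and $\sigma_A$. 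The remaining summability and lower-bound arguments are then routine.
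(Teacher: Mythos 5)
Your proposal is correct and follows essentially the same route as the paper's proof: telescope the descent inequality of Lemma \ref{lem:decent-pl}, square the multiplier bound \eqref{eq:diff-lambda-bound2} and absorb the gradient-dependent part into $\tfrac{\tau_k}{4}\|\grad\Phi_k(x_k)\|^2$ via the condition $c_\tau \le \tfrac{1}{16 c_\gamma \alpha^2 \sigma_A^2}$, sum the convergent series $\sum_k \tfrac{1}{k\log^2(k+2)}$ and $\sum_k k^{-4/3}$, and uniformly lower-bound the terminal augmented Lagrangian. The only (harmless) deviation is that you lower-bound $\mathcal{L}_{\rho_K}$ by Young's inequality, $-\langle\lambda_K,\mathcal{A}x_K-y_K\rangle+\tfrac{\rho_K}{2}\|\mathcal{A}x_K-y_K\|^2\ge-\tfrac{\lambda_{\max}^2}{2\rho_K}$, whereas the paper drops the quadratic term and invokes the feasibility bound together with the manifold diameter $\mathcal{D}$; both give a uniform constant.
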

\begin{proof}
  Combining with Lemmas \ref{lem:bound-lambda} and \ref{lem:decent-pl} yields
 $$
\begin{aligned}
& \mathcal{L}_{\rho_{k+1}}\left(x_{k+1}, y_{k+1}, \lambda_{k+1}\right)- \mathcal{L}_{\rho_{k}}\left(x_k, y_k, \lambda_k\right) 
    \leq - \frac{\tau_k}{2}\| \text{grad}_x  \mathcal{L}_{\rho_k}\left(x_{k}, y_{k+1}, \lambda_k\right) \|^2 \\
    & + \left( \frac{1}{\gamma_{k+1}} +\frac{c_{\rho}^3}{3} \frac{1}{ \gamma_{k+1}^2 \rho_k^2} \right) \left(\frac{2\gamma_{k+1}^2}{\rho_k^2} (\ell_h + \lambda_{\max})^2  +2\gamma_{k+1}^2\sigma_A^2\|x_{k+1} - x_k \|^2 \right) \\
    \leq & - \frac{\tau_k}{2}\| \text{grad}_x  \mathcal{L}_{\rho_k}\left(x_{k}, y_{k+1}, \lambda_k\right) \|^2 +   (\frac{2\gamma_{k+1}}{\rho_k^2}+\frac{2c_{\rho}^3}{3\rho_k^4}) (\ell_h +   \lambda_{\max})^2  +( 2\gamma_{k+1}\sigma_A^2 + \frac{2c_{\rho}^3 \sigma_A^2}{\rho_k^2} )\|x_{k+1} - x_k \|^2  \\
        \leq & - \frac{\tau_k}{2}\| \text{grad}_x  \mathcal{L}_{\rho_k}\left(x_{k}, y_{k+1}, \lambda_k\right) \|^2 +(   \frac{2c_{\gamma}}{c_{\rho}} \frac{1}{   k \log^2(k+2)}
 + \frac{2}{3c_{\rho}} k^{-4/3}) (\ell_h + \lambda_{\max})^2 \\
 &+4\gamma_{k+1}\tau_k^2 \alpha^2 \sigma_A^2\| \text{grad}_x  \mathcal{L}_{\rho_k}\left(x_{k}, y_{k+1}, \lambda_k\right) \|^2  \\
           \leq & - \frac{\tau_k}{4}\| \text{grad}_x  \mathcal{L}_{\rho_k}\left(x_{k}, y_{k+1}, \lambda_k\right) \|^2 +   (\frac{2c_{\gamma}}{c_{\rho}} \frac{1}{   k \log^2(k+2)}
 + \frac{2}{3c_{\rho}} k^{-4/3})(\ell_h + \lambda_{\max})^2  \\
\end{aligned}
$$
where the third inequality follows from \eqref{eq:retrac-lipscitz} and step 3 in Algorithm \ref{alg:plradmm}, and
$$
\frac{2\gamma_{k+1}}{\rho_k^2}+\frac{2c_{\rho}^3}{3\rho_k^4} \leq  \frac{2c_{\gamma}}{c_{\rho}} \frac{1}{   k \log^2(k+2)}
 + \frac{2}{3c_{\rho}} k^{-4/3},$$
the last inequality follows $\tau_k \leq c_{\tau} \leq  \frac{1}{16 c_{\gamma}Q^2 \sigma_A^2}$. Summing over $k = 1$ to $K$:
\begin{equation*}
\begin{aligned}
  \sum_{k=1}^{K}\frac{\tau_k}{4}\| \text{grad}  \mathcal{L}_{\rho_k}\left(x_{k}, y_{k+1}, \lambda_k\right) \|^2 
   \leq & \underbrace{\mathcal{L}_{\rho_1}\left(x_1, y_1, \lambda_1\right) - \mathcal{L}_{\rho_{K+1}}\left(x_{K+1}, y_{K+1}, \lambda_{K+1}\right)}_{a}  \\
  &+ \frac{2c_{\gamma}(\ell_h + \lambda_{\max})^2}{c_{\rho}} \underbrace{\sum_{k=1}^K \frac{1}{   k \log^2(k+2)}}_{b}
 + \frac{2(\ell_h + \lambda_{\max})^2}{3c_{\rho}} \underbrace{\sum_{k=1}^K k^{-4/3}}_{c}\\
\end{aligned}
\end{equation*}
Now we bound $a,b,c$, respectively. For $a$, let us denote $\phi_k: = \mathcal{L}_{\rho_k}(x_k,y_k,\lambda_k)$, then we have
\begin{equation}
    \begin{aligned}
        \phi_k & = f(x_k) + h(y_k) - \langle \lambda_k, \mathcal{A}x_k - y_k\rangle + \frac{\rho_k}{2}\|\mathcal{A}x_k - y_k\|^2\\
        & \geq  f(x_k) + h(y_k) - \| \lambda_k\| \| \mathcal{A}x_k - y_k\| \\
         & \geq  f_* + h_* - \lambda_{\max} ( \frac{\ell_h + \lambda_{\max}}{\rho_{k-1}}  +\sigma_A\|x_{k} - x_{k-1} \|)\\
         & \geq f_* + h_* - \lambda_{\max} ( \ell_h + \lambda_{\max}  +\sigma_A\mathcal{D}).
        \end{aligned}
\end{equation}
We let $\mathcal{C}_1 : = f_* + h_* - \lambda_{\max} ( \ell_h + \lambda_{\max}  +\sigma_A\mathcal{D})$, and obtain $a \leq \phi_1 - \mathcal{C}_1$. For $b$, following \cite{sahin2019inexact}, there exists a constant $\mathcal{C}_2$ such that 
\begin{equation}
    b \leq  \sum_{k=1}^{\infty} \frac{1}{   k \log^2(k+2)} \leq \mathcal{C}_2.
\end{equation}
For $c$, since $4/3 > 1$, the sequence $\sum_{k=1}^{\infty} k^{-4/3}$ is convergence, there exists a constant $\mathcal{C}_3$ such that $c \leq \mathcal{C}_3$. Combining with those terms, one concludes that
\begin{equation}
    \sum_{k=1}^{K}\frac{\tau_k}{4}\| \text{grad}  \mathcal{L}_{\rho_k}\left(x_{k}, y_{k+1}, \lambda_k\right) \|^2 \leq \mathcal{C}_1 + (\frac{2c_{\gamma}}{c_{\rho}} \mathcal{C}_2 + \frac{2}{3c_{\rho}}\mathcal{C}_3)(\ell_h + \lambda_{\max})^2.
\end{equation}
Let us denote $\mathcal{G}: = \mathcal{C}_1 + (\frac{2c_{\gamma}}{c_{\rho}} \mathcal{C}_2 + \frac{2}{3c_{\rho}}\mathcal{C}_3)(\ell_h + \lambda_{\max})^2$. The proof is completed.

\end{proof}

\begin{proof}[Proof of Theorem \ref{cor:relation-optima-x-pl}]
It follows from Theorem \ref{them:rate} that
\begin{equation}\label{eq:temp1}
\begin{aligned}
   & \min_{ \lceil K/2 \rceil \leq k\leq K } \| \text{grad}  \mathcal{L}_{\rho_{k-1}}\left(x_{k-1}, y_{k}, \lambda_{k-1}\right) \|^2 \left(\sum_{k=\lceil K/2 \rceil}^K \tau_k \right)  \\
      \leq & \sum_{k=\lceil K/2 \rceil}^K \tau_k \| \text{grad}  \mathcal{L}_{\rho_{k-1}}\left(x_{k-1}, y_{k}, \lambda_{k-1}\right) \|^2 \leq 4\mathcal{G},
    \end{aligned}
\end{equation}
which implies that
\begin{equation}
    \begin{aligned}
        \min_{ \lceil K/2 \rceil \leq k\leq K } \| \text{grad}  \mathcal{L}_{\rho_{k-1}}\left(x_{k-1}, y_{k}, \lambda_{k-1}\right) \|^2 \leq \frac{4\mathcal{G}}{\sum_{k=\lceil K/2 \rceil}^K \tau_k}.
    \end{aligned}
\end{equation}
Now we bound $\sum_{k=\lceil K/2 \rceil}^K \tau_k$. Since  $\tau_k = c_{\tau} k^{-1/3}$ and the function $x^{-1/3}$ is monotonically decreasing, one has that
\begin{equation}
\begin{aligned}
    \sum_{k=\lceil K/2 \rceil}^K \tau_k & \geq c_{\tau}\sum_{k=\lceil K/2 \rceil}^K  \int_k^{k+1} x^{-1 / 3} \mathrm{~d} x \\
    &= c_{\tau}\sum_{k=\lceil K/2 \rceil}^K \frac{3}{2} \left[ (k+1)^{2/3} - k^{2/3} \right] \\
    &= \frac{3c_{\tau}}{2} \left[ (K+1)^{2/3} - (K/2+1)^{2/3} \right] \\
   & \geq \frac{3c_{\tau}}{2} \left[ \frac{2}{3} (K+1)^{-1/3} ((K+1) - (K/2+1)) \right] \\
   & = \frac{c_{\tau}}{2} K (K+1)^{-2/3}  \geq \frac{c_{\tau}}{4} (K+1)^{2/3}.
    \end{aligned}
\end{equation}
where the second inequality uses concavity of $x^{2/3}$. 
Together with  \eqref{eq:temp1},  there exists $\kappa \in [ \lceil K/2 \rceil, K ]$ such that
\begin{equation}\label{eq:bound-grad}
    \| \text{grad}  \mathcal{L}_{\rho_{\kappa-1}}\left(x_{\kappa-1}, y_{\kappa}, \lambda_{\kappa-1}\right) \|^2 \leq \frac{16\mathcal{G}}{c_{\tau}} (K+1)^{-2/3}.
\end{equation}
Using \eqref{eq:bound-grad} and Lemma \ref{lem:relation-optima-x}, it is easily shown that
\begin{equation}\label{eq:bound-first-two-term}
\begin{aligned}
    \| \mathcal{P}_{T_{x_\kappa}\mathcal{M}} ( -\mathcal{A}^* \bar{\lambda}_\kappa) + \text{grad}f(x_\kappa) \| \leq \frac{8\sqrt{\mathcal{G}}}{\sqrt{c_{\tau}}} (K+1)^{-1/3}, \\
    \text{dist}(-\bar{\lambda}_\kappa , \partial h(y_\kappa)) \leq \frac{4\sigma_A\alpha\sqrt{\mathcal{G}}}{\sqrt{c_{\tau}}} (K+1)^{-1/3}.
    \end{aligned}
\end{equation}
which proves the first two term in \eqref{eq:theorem:kkt-bound}.  For the last term, we have that
\begin{equation}
\begin{aligned}
    \|\mathcal{A}x_\kappa - y_\kappa\| & \leq  \sigma_A \alpha \tau_{\kappa-1}\| \text{grad}  \mathcal{L}_{\rho_{\kappa-1}}\left(x_{\kappa-1}, y_{\kappa}, \lambda_{\kappa-1}\right) \| + \frac{\ell_h + \lambda_{\max}}{\rho_{\kappa-1}} \\
    & \leq 8\sigma_A\alpha\sqrt{c_{\tau}\mathcal{G}} (K-2)^{-1/3}  (K+1)^{-1/3} + \frac{2(\ell_h+\lambda_{\max})}{c_{\rho}}(K-2)^{-1/3} \\
    & \leq 8\sigma_A\alpha\sqrt{c_{\tau}\mathcal{G}} (K-2)^{-2/3}   + \frac{2(\ell_h+\lambda_{\max})}{c_{\rho}}(K-2)^{-1/3},
    \end{aligned}
\end{equation}
where we use $\tau_{\kappa} = c_{\tau} (\kappa-1)^{-1/3}\leq c_{\tau} (\frac{K-2}{2})^{-1/3} \leq 2c_{\tau} (K-2)^{-1/3} $ and   $\rho_{\kappa-1} = c_{\rho}(\kappa-1)^{1/3} \geq c_{\rho} ( \frac{K-2}{2})^{1/3} \geq \frac{c_{\rho}}{2} (K-2)^{1/3}$. Combining with \eqref{eq:bound-first-two-term} completes the proof. 
\end{proof}

\newpage

\section{More numerical results}\label{appendix:numerical}
This section emphasizes that the condition in \eqref{condi:alpha-1} is not essential.
Here, we consider the condition $c_\rho\geq 1$ and focus on the DPCP formulation (\ref{DPCP}), comparing our ARADMM method with SOC \cite{lai2014folding}, MADMM \cite{Kovna2015} and RADMM \cite{li2024riemannian}. 
For ARADMM, we reset $\gamma_0=10^3$, $c_\rho=0.6$ and retain $\rho_0,c_\tau,c_{\gamma}$. The parameter settings for the baseline algorithms follow \cite{li2024riemannian}. 
We report the average results over 10 trials with random initializations in Tables \ref{DPCP-table-np} and \ref{DPCP-table-p1p2}. The function values for the sequence on the manifold versus CPU time (in seconds) and iteration number are recorded in Figures \ref{DPCPnpFig} and \ref{DPCPp1p2Fig}. 
It is shown that the results remain consistent with those reported using $c_\rho=1$ in subsection \ref{sec:DPCP}. 

From a theoretical point of view, the condition appears initially in Lemma \ref{Euclidean vs manifold}, where it was used to simplify the expression in the following inequality: 
\[
\Phi_k(\mathcal{R}_{x}(\eta)) \leq \Phi_k(x) + \left<\eta,\grad \Phi_k(x)\right> + \frac{\alpha^2 (\ell_{\nabla f} +  \sigma_A^2 )  +2 G \beta}{2}\|\eta\|^2.
\]
Assuming $c_\rho\geq 1$ implies $\rho_k\geq 1$, which simplifies the bound to: 
  $$\alpha^2 (\ell_{\nabla f} +  \sigma_A^2 )  +2 G \beta\leq (\alpha^2 (\ell_{\nabla f} +  \sigma_A^2 )  +2 G \beta) \rho_k.$$ 
  However, this assumption is not strictly necessary. In fact, we can relax it by identifying a threshold index $\tilde{k}>0$, such that for all $k\geq\tilde{k}$, we have $\rho_k\geq 1$. This does not affect the final convergence guarantees—the same convergence rate still holds. This type of strategy has been employed in prior work (e.g., \cite{lu2024variance}) to mitigate the reliance on precise problem constants. This is why $c_\rho=0.6$ also works well empirically here. In fact, the condition for other parameters can also be eliminated by this strategy.%

\begin{figure}
	\centering
	\begin{subfigure}[]{0.245\linewidth}
		\centering
		\begin{minipage}{1\linewidth}
			\includegraphics[width=1\linewidth]{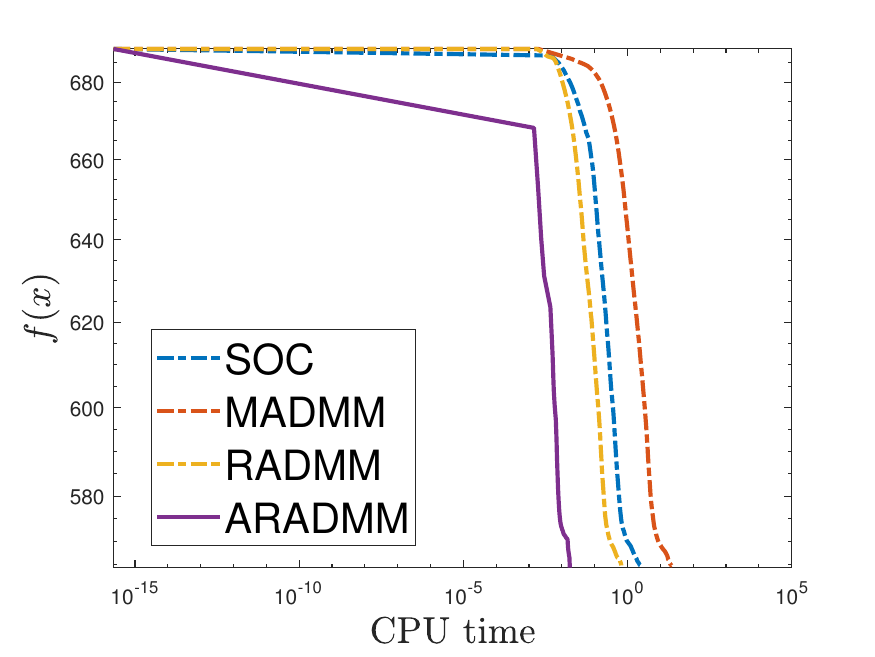}
		\end{minipage}
		\begin{minipage}{1\linewidth}
			\includegraphics[width=1\linewidth]{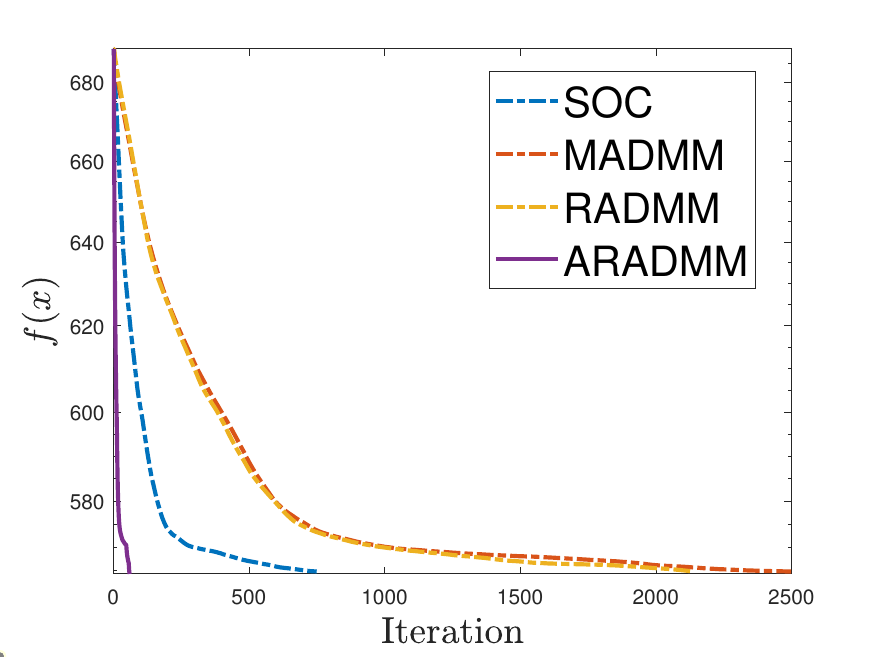}
		\end{minipage}
        \caption{$n=30,p=4$}
	\end{subfigure}
	\begin{subfigure}[]{0.245\linewidth}
		\centering
		\begin{minipage}{1\linewidth}
			\includegraphics[width=1\linewidth]{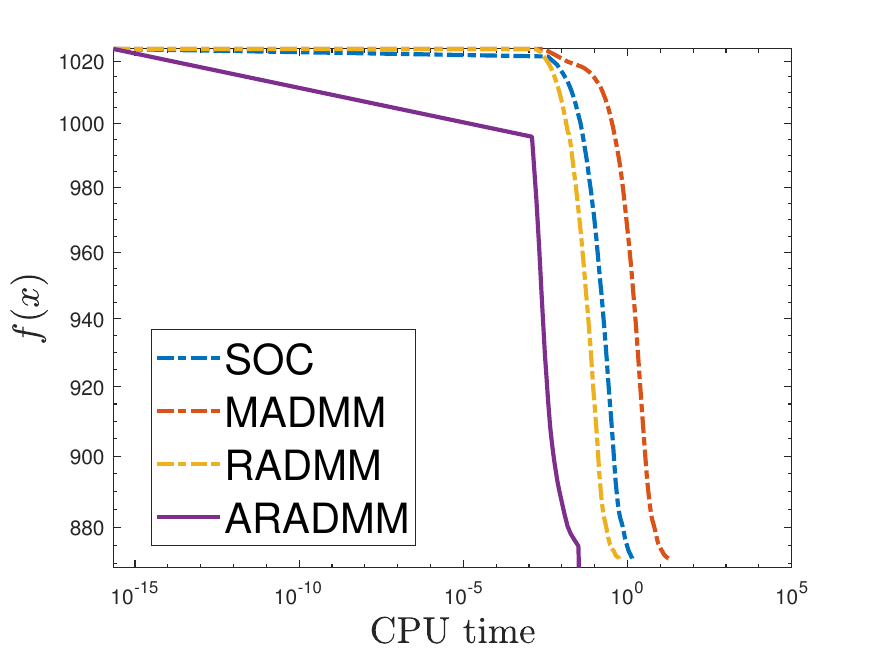}
		\end{minipage}
		\begin{minipage}{1\linewidth}
			\includegraphics[width=1\linewidth]{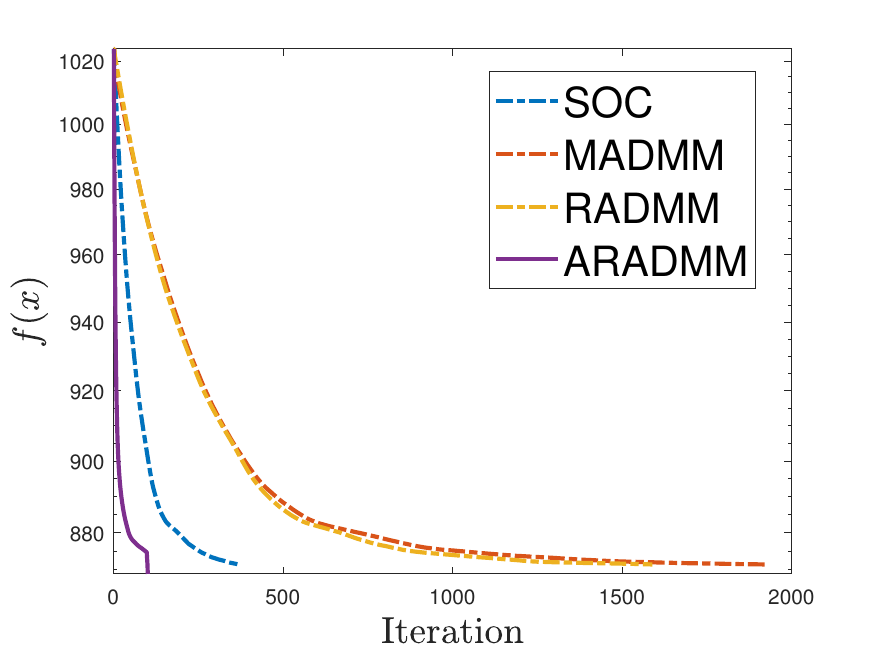}
		\end{minipage}
        \caption{$n=30,p=6$}
	\end{subfigure}
	\begin{subfigure}[]{0.245\linewidth}
		\centering
		\begin{minipage}{1\linewidth}
			\includegraphics[width=1\linewidth]{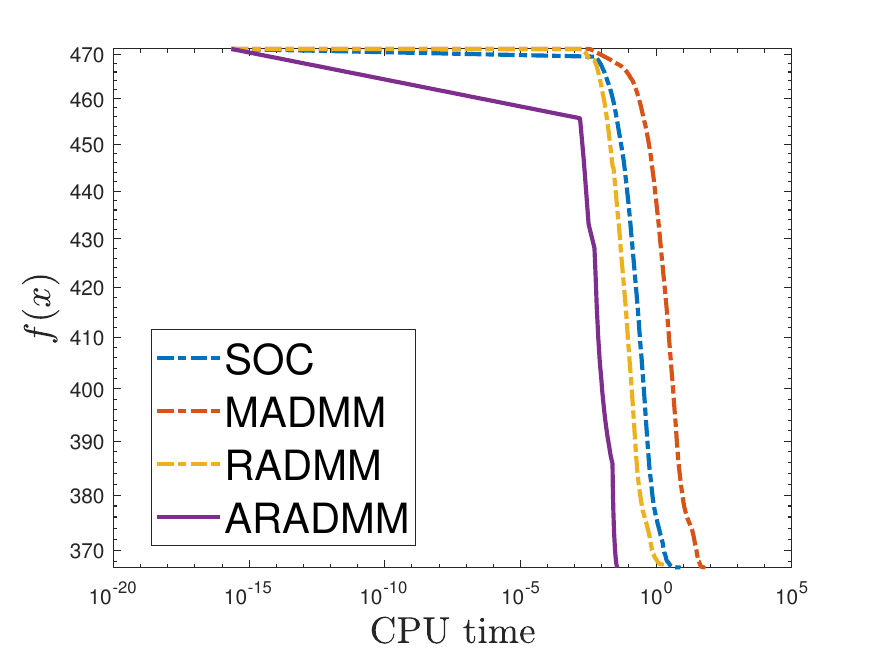}
		\end{minipage}
		\begin{minipage}{1\linewidth}
			\includegraphics[width=1\linewidth]{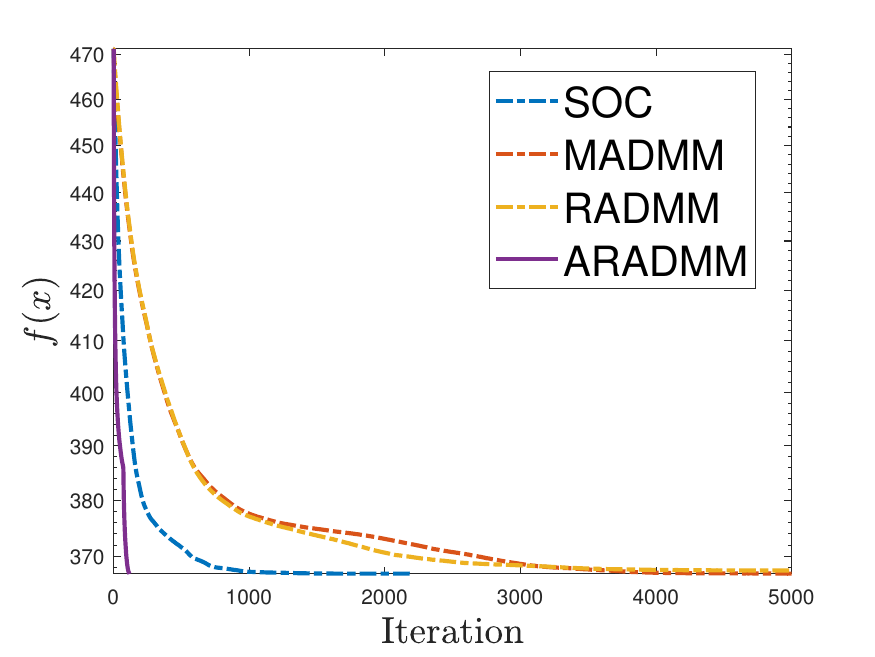}
		\end{minipage}
        \caption{$n=60,p=4$}
	\end{subfigure}
	\begin{subfigure}[]{0.245\linewidth}
		\centering
		\begin{minipage}{1\linewidth}
			\includegraphics[width=1\linewidth]{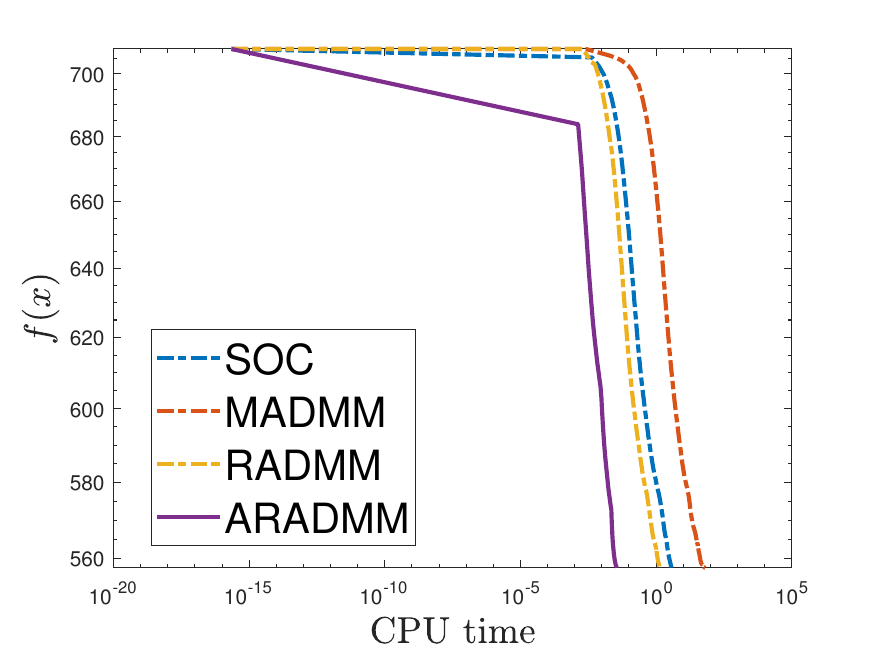}
		\end{minipage}
		\begin{minipage}{1\linewidth}
			\includegraphics[width=1\linewidth]{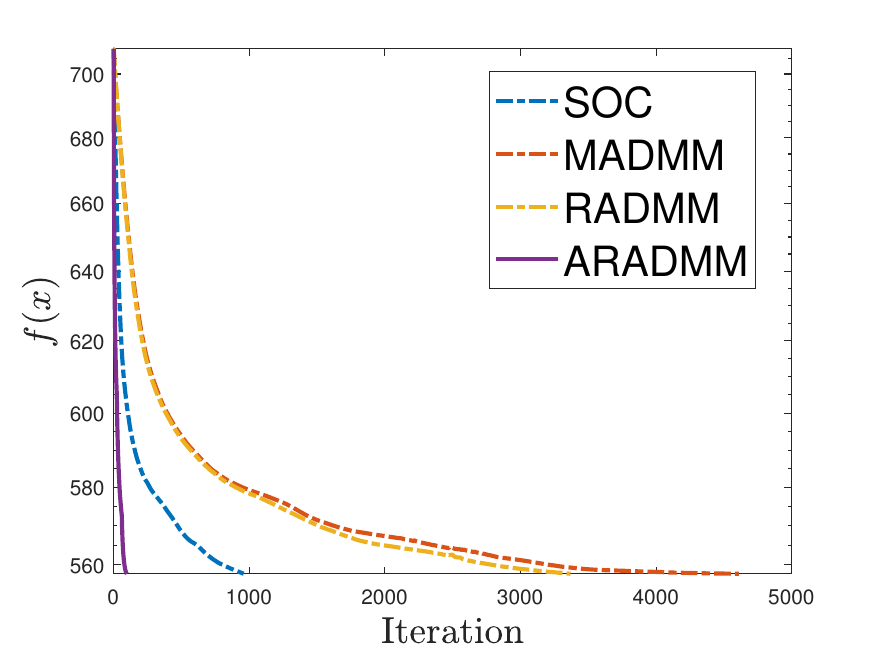}
		\end{minipage}
        \caption{$n=60,p=6$}
	\end{subfigure}
	\caption{Comparison with ADMM-type methods for solving (\ref{DPCP}) with different $(n,p)$ and $(p_1,p_2)=(150,1000)$. }
	\label{DPCPnpFig}
\end{figure}

\begin{table}[H]
  \caption{Numerical results of ADMM-type methods for solving (\ref{DPCP}) with different $(n,p)$ and $(p_1,p_2)=(150,1000)$.}
  \label{DPCP-table-np}
  \centering
  \resizebox{1\textwidth}{!}{\begin{tabular}{ccccccccc}
  	\toprule
  	Settings & \multicolumn{2}{c}{SOC} & \multicolumn{2}{c}{MADMM} & \multicolumn{2}{c}{RADMM} & \multicolumn{2}{c}{ARADMM} \\ 
  	\cmidrule(r){2-3} \cmidrule(r){4-5} \cmidrule(r){6-7} \cmidrule(r){8-9}
  	$(n,p)$ & Obj & CPU & Obj  & CPU & Obj & CPU & Obj & CPU \\ 
  	\midrule
  	$(30,4)$& 568.6116 & 6.8864 & 568.4316 & 18.0771 & 569.3725 & 0.7801 & {\bf 568.3537} & {\bf 0.0244} \\
  	$(30,6)$& 852.8768 & 2.0746 & 852.8180 & 9.3013 & 852.8157 & 0.4351 & {\bf 852.8125} & {\bf 0.0191}\\
  	$(60,4)$& 366.1369 & 2.6194 & 365.8585 & 16.2287 & 365.9267 & 0.8130 & {\bf 365.6622} & {\bf 0.0245} \\ 
  	$(60,6)$& 553.5725 & 3.6961 & 553.5749 & 43.4826 & 554.0265 & 1.0868 & {\bf 553.5650} & {\bf 0.1541}\\
  	\bottomrule
  \end{tabular}}
\end{table}

\begin{figure}
	\centering
	\begin{subfigure}[]{0.245\linewidth}
		\centering
		\begin{minipage}{1\linewidth}
			\includegraphics[width=1\linewidth]{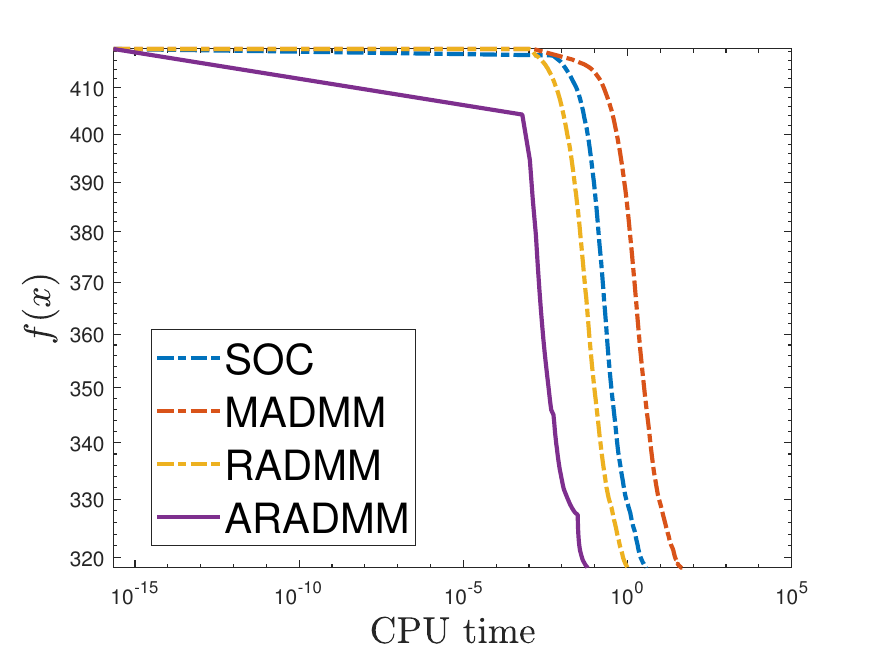}
		\end{minipage}
		\begin{minipage}{1\linewidth}
			\includegraphics[width=1\linewidth]{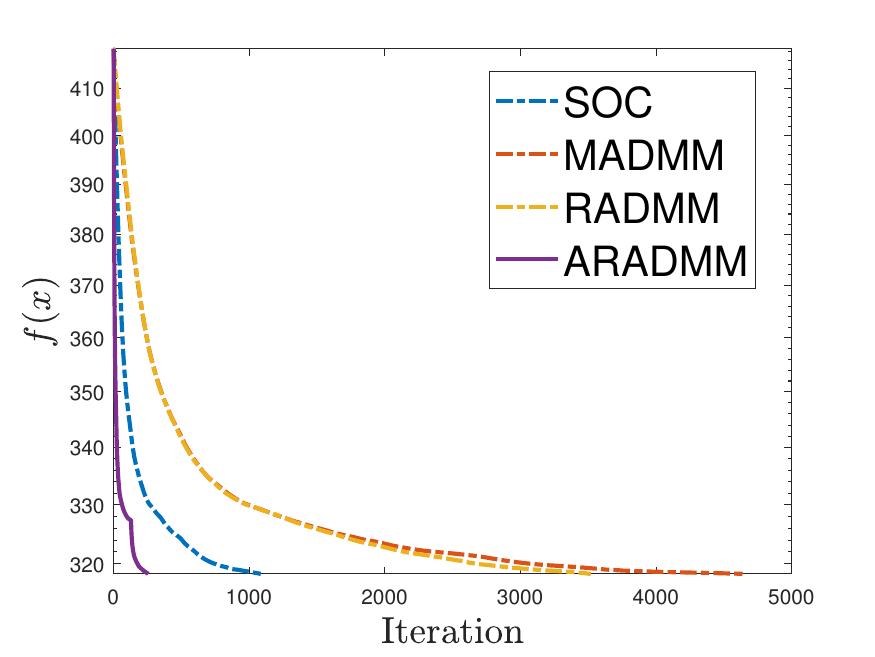}
		\end{minipage}
        \caption{$p_1=100,p_2=600$}
	\end{subfigure}
	\begin{subfigure}[]{0.245\linewidth}
		\centering
		\begin{minipage}{1\linewidth}
			\includegraphics[width=1\linewidth]{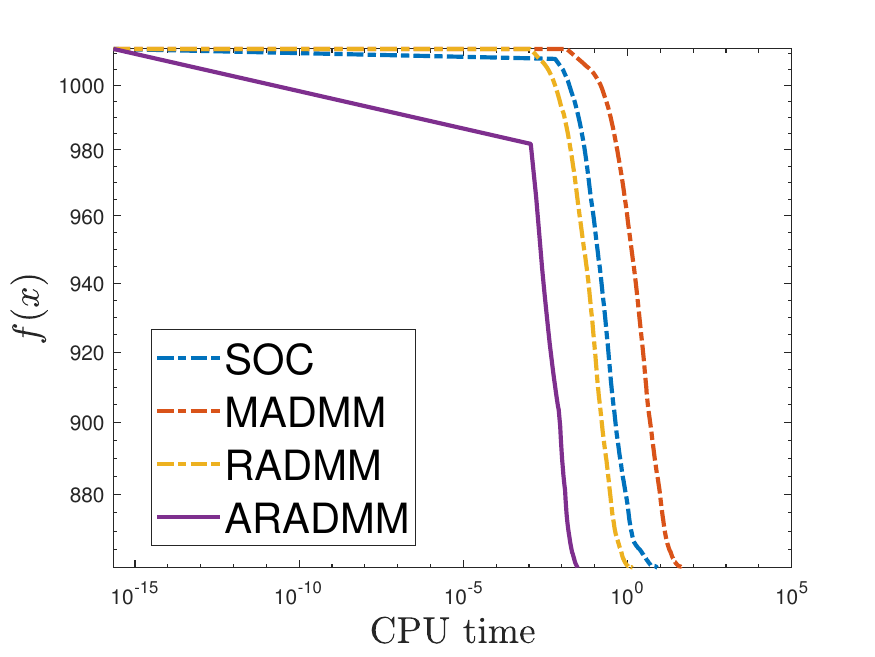}
		\end{minipage}
		\begin{minipage}{1\linewidth}
			\includegraphics[width=1\linewidth]{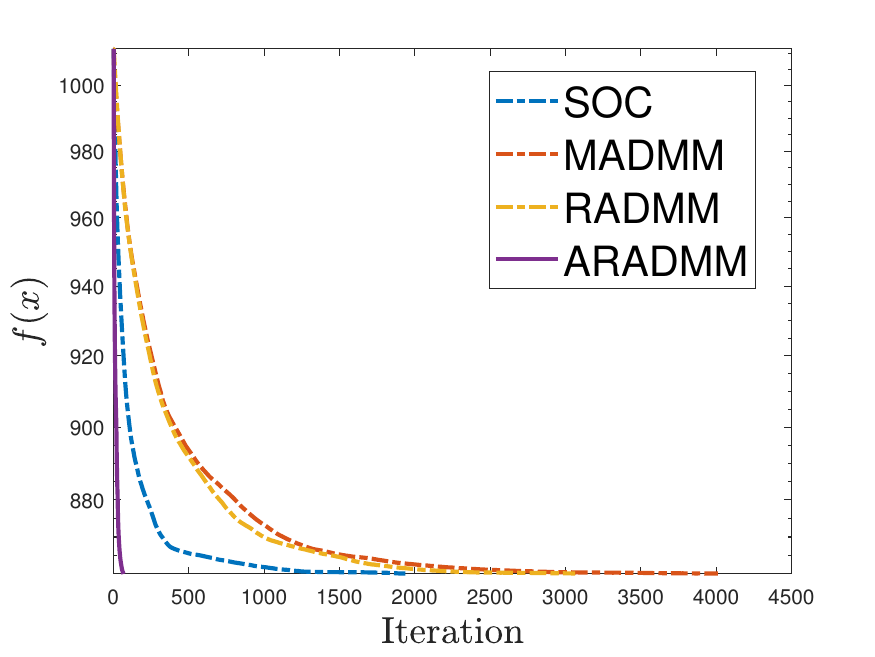}
		\end{minipage}
        \caption{$p_1=100,p_2=1600$}
	\end{subfigure}
	\begin{subfigure}[]{0.245\linewidth}
		\centering
		\begin{minipage}{1\linewidth}
			\includegraphics[width=1\linewidth]{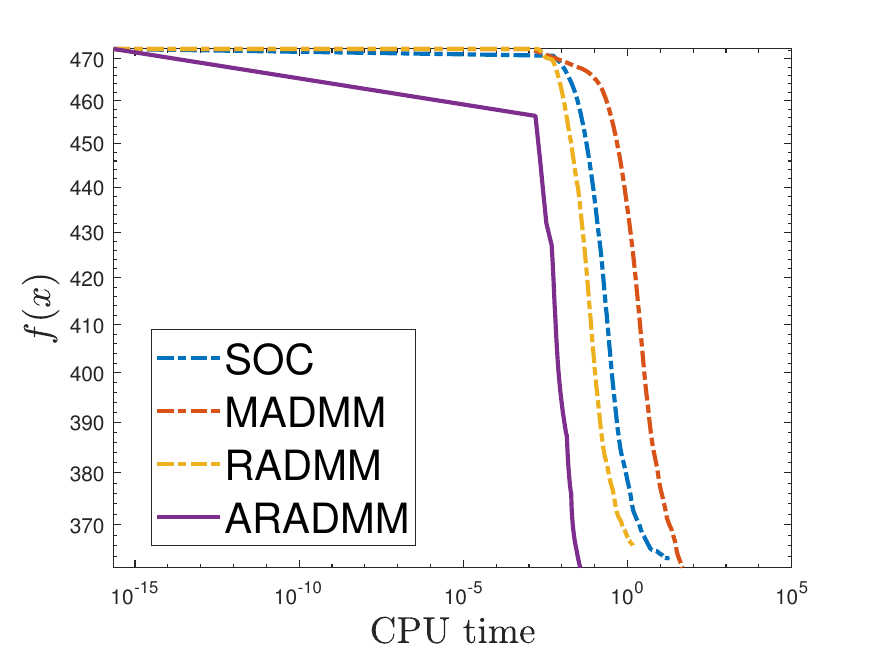}
		\end{minipage}
		\begin{minipage}{1\linewidth}
			\includegraphics[width=1\linewidth]{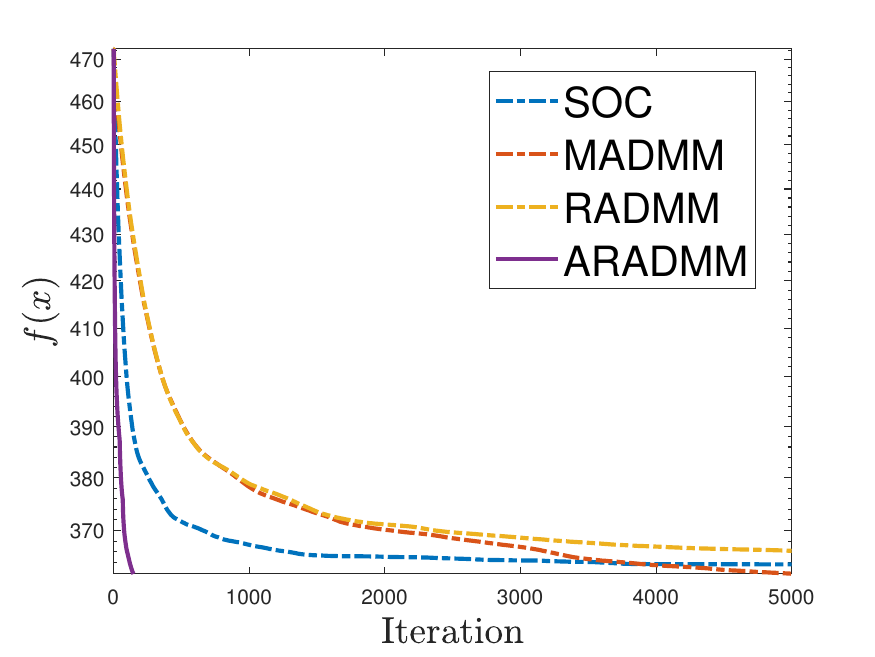}
		\end{minipage}
        \caption{$p_1=200,p_2=600$}
	\end{subfigure}
	\begin{subfigure}[]{0.245\linewidth}
		\centering
		\begin{minipage}{1\linewidth}
			\includegraphics[width=1\linewidth]{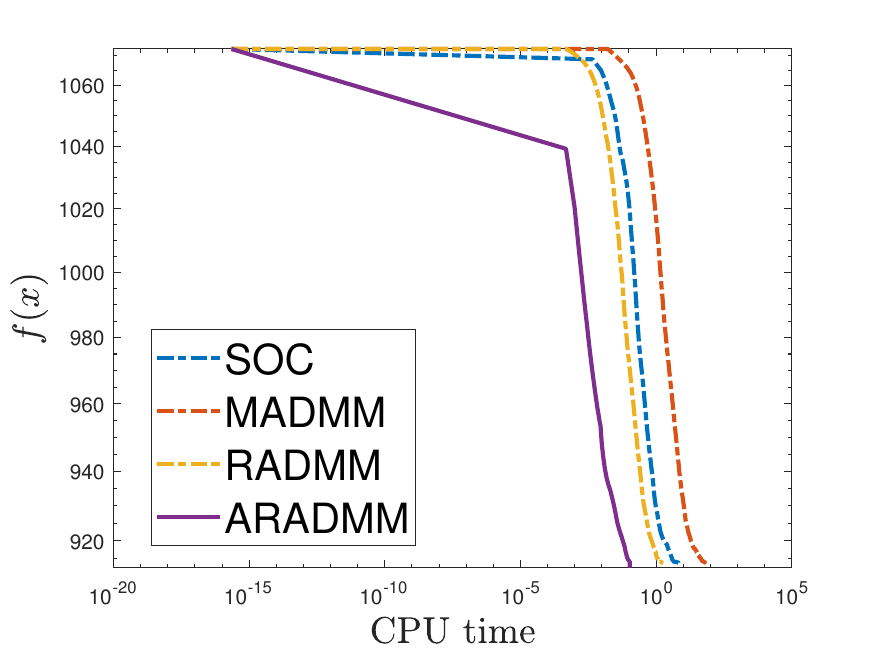}
		\end{minipage}
		\begin{minipage}{1\linewidth}
			\includegraphics[width=1\linewidth]{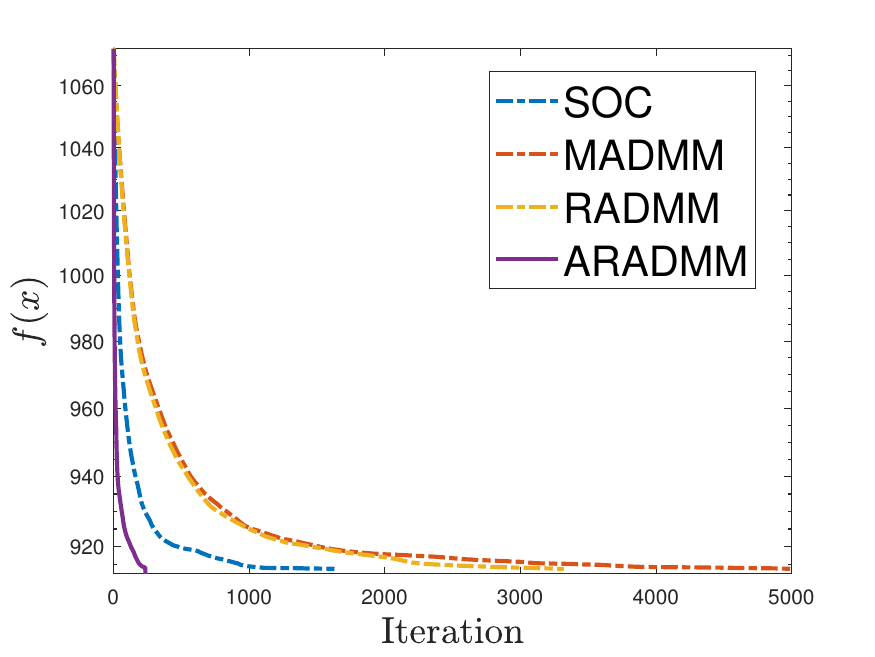}
		\end{minipage}
        \caption{$p_1=200,p_2=1600$}
	\end{subfigure}
	\caption{Comparison with ADMM-type methods for solving (\ref{DPCP}) with different $(p_1,p_2)$ and $(n,p)=(45,5)$. }
	\label{DPCPp1p2Fig}
\end{figure}

\begin{table}[H]
	\centering
	\caption{Numerical results of ADMM-type methods for solving (\ref{DPCP}) with different $(p_1,p_2)$ and $(n,p)=(45,5)$.}
	\label{DPCP-table-p1p2}
	\resizebox{1\textwidth}{!}{
	\begin{tabular}{ccccccccc}
		\toprule
		Settings & \multicolumn{2}{c}{SOC} & \multicolumn{2}{c}{MADMM} & \multicolumn{2}{c}{RADMM} & \multicolumn{2}{c}{ARADMM} \\ 
		\cmidrule(r){2-3} \cmidrule(r){4-5} \cmidrule(r){6-7} \cmidrule(r){8-9}
		$(p_1,p_2)$ & obj & CPU & obj & CPU & obj & CPU & obj & CPU \\ 
		\midrule
		$(100,600)$& 317.0240 & 1.9779 & 316.9943 & 29.2210 & 317.3680 & 0.8800 & {\bf 316.8880} & {\bf 0.0238} \\
		$(100,1600)$& 859.3896 & 3.5202 & 859.3150 & 26.3167 & 859.3417 & 0.2115 & {\bf 859.0431} & {\bf 0.0206} \\
		$(200,600)$& 356.6751 & 2.5096 & 356.5580 & 30.6229 & 357.4611 & 0.8100 & {\bf 356.4748} & {\bf 0.0273} \\ 
		$(200,1600)$& 908.4715 & 5.2544 & 908.0242 & 12.7438 & 909.4992 & 0.7122 & {\bf 907.8516} & {\bf 0.1086} \\
		\bottomrule
	\end{tabular}}
\end{table}

\end{document}